\numberwithin{equation}{section}
\theoremstyle{plain}
\newtheorem{thm}{Theorem}[section]
\newtheorem{rem}{Remark}[section]
\newtheorem{cor}{Corollary}[section]
\newtheorem{lem}{Lemma}[section]
\newtheorem{deff}{Definition}[section]
\newcommand{\dd}{\: \mathrm{d}}
\newcommand{\dE}{\mathbb{E}}
\newcommand{\dR}{\mathbb{R}}
\newcommand{\dP}{\mathbb{P}}
\newcommand{\cD}{\mathcal{D}}
\newcommand{\cN}{\mathcal{N}}
\newcommand{\rI}{\mathrm{I}}
\newcommand{\cF}{\mathcal{F}}
\newcommand{\cI}{\mathcal{I}}
\newcommand{\argmax}[1]{\underset{#1}{\operatorname{arg}\,\operatorname{max}}\;}
\newcommand{\wh}{\widehat}
\newcommand{\vp}{\varphi}
\newcommand{\ind}{\mbox{1}\kern-.25em \mbox{I}}
\def\build#1_#2^#3{\mathrel{\mathop{\kern 0pt#1}\limits_{#2}^{#3}}}
\def\videbox{\mathbin{\vbox{\hrule\hbox{\vrule height1.4ex \kern.6em\vrule height1.4ex}\hrule}}}
\def\demend{\hfill $\videbox$\\}
\email{Bernard.Bercu@u-bordeaux.fr}
\email{Adrien.Richou@u-bordeaux.fr}
\keywords{Ornstein-Uhlenbeck process, Maximum likelihood estimates, Large deviations}
\begin{document}
\title[Large deviations for the Ornstein-Uhlenbeck process without tears]
{Large deviations and concentration inequalities for the Ornstein-Uhlenbeck process without tears \vspace{1ex}}
\author{Bernard Bercu}
\address{Universit\'e de Bordeaux, Institut de Math\'ematiques de Bordeaux,
UMR 5251, 351 Cours de la Lib\'eration, 33405 Talence cedex, France.}
\author{Adrien Richou}
\address{
}
\thanks{}

\begin{abstract}
Our goal is to establish large deviations and concentration inequalities 
for the maximum likelihood estimator of the drift parameter of the Ornstein-Uhlenbeck 
process without tears. We propose a new strategy to establish large deviation results 
which allows us, via a suitable transformation, to circumvent the classical difficulty of non-steepness. 
Our approach holds in the stable case where the process is positive recurrent as well as
in the unstable and explosive cases where the process is respectively null recurrent and transient.
Notwithstanding of this trichotomy, we also provide new concentration inequalities 
for the maximum likelihood estimator.
\end{abstract}

\maketitle


\section{INTRODUCTION}


Consider the Ornstein-Uhlenbeck process observed over the time 
interval $[0,T]$
\begin{equation}
\label{OUPS}
dX_t=\theta X_t dt + dB_t
\end{equation}
where $(B_t)$ is a standard Brownian motion and the drift $\theta$ is an unknown real parameter. For the sake of
simplicity, we assume that the initial state $X_0 = 0$. The process is said to be stable if $\theta < 0$, 
unstable if $\theta = 0$, and explosive if $\theta > 0$. The maximum likelihood estimator of $\theta$ is given by
\begin{equation} 
\label{DEFTHETAHAT}
\widehat{\theta}_T = \frac{\int_0^T X_t dX_t}{\int_0^T X_t^2 dt} = \frac{X_T^2 - T}{2\int_0^T X_t^2 dt}.
\end{equation}
It is well-known that in the stable, unstable, and explosive cases
\begin{equation*} 
\lim_{T\rightarrow \infty}\widehat{\theta}_T = \theta \hspace{1cm}\text{a.s.}
\end{equation*}
The purpose of this paper is to establish large deviation principles (LDP) and concentration inequalities (CI)
for $(\wh{\theta}_T)$ via fairly easy to handle arguments.
\\ \vspace{-1ex}\par
In the stable case, Florens-Landais and Pham \cite{FP99} proved an LDP for the score function
and they were able to deduce, by contraction, the LDP for $(\wh{\theta}_T)$. However, one can realize in Lemma 4.3 of \cite{FP99}
that the normalized cumulant generating function of the score function is quite complicated to
compute. Moreover, its LDP relies on a sophisticated time varying change of probability.
\\ \vspace{-1ex}\par
In the unstable and explosive cases \cite{BCS12}, the strategy for proving an LDP for $(\wh{\theta}_T)$ is also
far from being obvious. As a matter of fact, on can observe in Lemma 2.1 of \cite{BCS12} that
the normalized cumulant generating function is also very complicated to evaluate. Moreover, as
the limiting cumulant generating function is not steep, it is also necessary to make use of a sophisticated 
time varying change of probability.
\\ \vspace{-1ex}\par
Our approach is totally different. It will allows us, via a suitable transformation,
to circumvent the classical difficulty of non-steepness. The starting point is to establish, thanks
to G\"artner-Ellis's theorem \cite{DZ98}, an LDP for the
couple
\begin{equation}
\label{COUPLE}
V_T=\Bigl(\frac{X_T}{\sqrt{T}},\frac{S_T}{T}\Bigr)
\end{equation}
where the energy $S_T$ is given by
$$
S_T= \int_0^T X_t^2dt.
$$ 
Then, we will obtain the LDP for $(\wh{\theta}_T)$ by a direct use of the contraction principle.
We refer the reader to the recent paper \cite{BR15} where this strategy was successfully implemented
for the Ornstein-Uhlenbeck process with shift.
\\ \vspace{-1ex}\par
Furthermore, to the best of our knowledge, very few results are available on CI except in the stable case
for the energy  \cite{GGW14}, \cite{L01}. In addition, one can observe that Theorem 1.1 of Gao and Jiang \cite{GJ09}
can be significantly improved, even in the special case where the shift parameter is zero.
Our second goal is to fill the gap by proving CI for $(\wh{\theta}_T)$
in the stable, unstable, and explosive cases.
\\ \vspace{-1ex}\par
The paper is organized as follows. In Section \ref{S-LDP}, we establish an LDP for the couple given 
by \eqref{COUPLE} and we deduce by contraction the LDP for $(\wh{\theta}_T)$ in the stable, unstable, 
and explosive cases. Section \ref{S-CI} is devoted to CI for $(\wh{\theta}_T)$.
Standard tools for proving LDP such as the G\"artner-Ellis theorem and 
the contraction principle are recalled in Appendix A, while
all technical proofs of Sections \ref{S-LDP} and \ref{S-CI} 
are postponed to Appendices B and C.


\section{Large deviations.}
\setcounter{equation}{0}
\label{S-LDP}


The usual notions of full and weak LDP are as follows.

\begin{deff}
A sequence of random vectors $(V_T)$ of $\dR^d$ satisfies an LDP with speed $T$ and
rate function $I$ if $I$ is a lower semicontinuous function from $\dR^d$ to $[0,+\infty]$ such that,
\begin{enumerate}[(i)] 
\item $\!$Upper bound: For any closed set $F  \subset \dR^d$,
\begin{equation}
\label{LDPUB}
 \limsup_{T\rightarrow\infty}\frac{1}{T}\log \dP\bigl(V_T\in F\bigr)\leq
-\inf_{x\in F}I(x).
\end{equation} 
\item $\!$Lower bound: For any open set $G \subset \dR^d$,
\begin{equation}
\label{LDPLB}
-\inf_{x\in G}I(x)\leq \liminf_{n\rightarrow\infty}\frac{1}{T}\log \dP\bigl(V_T\in G\bigr).
\end{equation}
\end{enumerate}
Moreover, $I$ is said to be a good rate function if its level sets are compact.
\end{deff}

\begin{deff}
A sequence of random vectors $(V_T)$ of $\dR^d$ satisfies a weak LDP with speed $T$ and
rate function $I$ if $I$ is a lower semicontinuous function from $\dR^d$ to $[0,+\infty]$ such that
the upper bound \eqref{LDPUB} holds for any compact set, while the lower bound \eqref{LDPLB}
is true for any open set.
\end{deff}

It is well-known that if $(V_T)$ is exponentially tight and satisfies
a weak LDP, then $I$ is a good rate function and the full LDP holds for $(V_T)$,
see Lemma 1.2.18 of \cite{DZ98}. 

\subsection{The stable case}

First of all, we focus our attention on the easy to handle stable case where the parameter $\theta$ is negative
in \eqref{OUPS}.

\begin{thm}
\label{T-LDP-SC}
The couple $(V_T)$, given by \eqref{COUPLE}, satisfies an LDP with speed $T$ and good rate function
$\cI_{\theta}$ given by
\begin{equation}
\label{RATE-SC}
 \cI_{\theta}(x,y)= \left\{ \begin{array}{ccl}
             {\displaystyle \frac{\theta(1 - x^2 + \theta y)}{2}} + {\displaystyle \frac{(1+x^2)^2}{8y}}
              & \textrm{if} & y>0,\vspace{1ex}\\
	     + \infty  & \textrm{if} &   y\leq 0.
            \end{array} \right.
\end{equation}
\end{thm}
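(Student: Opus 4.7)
The plan is to apply the G\"artner-Ellis theorem to $V_T$ at speed $T$. Accordingly, I would first compute, for $(a, b) \in \dR^2$, the normalized cumulant generating function
\[
\Lambda_T(a, b) = \frac{1}{T} \log \dE\!\left[\exp\!\left(a\sqrt{T}\, X_T + b\, S_T\right)\right],
\]
identify its pointwise limit $\Lambda(a, b)$, and recover the rate function as the Fenchel-Legendre transform $\Lambda^*$, which I would then match to \eqref{RATE-SC}.

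The core step is the computation of $\Lambda$, which I would carry out via a Girsanov change of measure. For $b \leq \theta^2/2$, set $\gamma = \sqrt{\theta^2 - 2b}$ and $\eta = -\theta - \gamma$, so that $\eta$ solves $\eta^2/2 + \theta\eta + b = 0$. The exponential martingale
\[
\frac{d\mathbb{Q}}{d\dP} = \exp\!\left(\eta \int_0^T X_t\,dB_t - \frac{\eta^2}{2} S_T\right)
\]
turns $(X_t)$ into a stable Ornstein-Uhlenbeck process with drift parameter $-\gamma$ under $\mathbb{Q}$. Using $\int_0^T X_t\,dB_t = (X_T^2 - T)/2 - \theta S_T$ from It\^o's formula together with the cancellation built into the choice of $\eta$, a direct rearrangement yields the clean identity
\[
\dE\!\left[e^{a\sqrt{T}\, X_T + b\, S_T}\right] = e^{\eta T/2}\, \dE_{\mathbb{Q}}\!\left[\exp\!\left(a\sqrt{T}\, X_T - \frac{\eta}{2}\, X_T^2\right)\right].
\]
Under $\mathbb{Q}$, $X_T$ is centered Gaussian with variance $(1-e^{-2\gamma T})/(2\gamma) \to 1/(2\gamma)$, so the right-hand side reduces to an explicit one-dimensional Gaussian integral. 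Taking logarithms, dividing by $T$, and passing to the limit gives
\[
\Lambda(a, b) = \frac{-\theta - \gamma}{2} + \frac{a^2}{2(\gamma - \theta)}, \qquad b \leq \theta^2/2.
\]

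I would next verify the G\"artner-Ellis hypotheses. The function $\Lambda$ is smooth on the open set $\dR \times (-\infty, \theta^2/2)$, which contains the origin; essential smoothness reduces to steepness at the boundary $b = \theta^2/2$, and a direct computation shows that $\partial_b \Lambda = \frac{1}{2\gamma} + \frac{a^2}{2\gamma(\gamma-\theta)^2}$ blows up as $\gamma \downarrow 0$. The theorem then delivers the LDP for $V_T$ at speed $T$ with good rate function $\Lambda^*$.

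Finally, matching $\Lambda^*$ with $\cI_\theta$ is pure convex duality. Optimizing over $a$ first is quadratic and gives $a^* = x(\gamma - \theta)$, reducing the problem to a one-variable optimization. Reparametrizing by $\gamma \geq 0$ (so $b = (\theta^2 - \gamma^2)/2$) and differentiating in $\gamma$ produces the critical point $\gamma^* = (1+x^2)/(2y)$ for $y > 0$, and substituting back recovers exactly \eqref{RATE-SC}. For $y \leq 0$ the supremum is $+\infty$ (take $\gamma \to \infty$ to see that $by = (\theta^2-\gamma^2)y/2 \to +\infty$ when $y < 0$), consistent with the convention in \eqref{RATE-SC}. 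The main computational hurdle is the Gaussian integral combined with the justification of the Girsanov identity (standard for OU; if Novikov is an issue for extreme values of $b$, one argues first for $b$ near $0$ and extends by analyticity). The remainder is routine Legendre duality and bookkeeping.
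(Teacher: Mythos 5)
Your proposal is correct and follows essentially the same route as the paper: compute the limiting cumulant generating function of $V_T$ by a Girsanov tilt that removes the $S_T$ term, evaluate the remaining one-dimensional Gaussian integral, check that the limit is essentially smooth with the origin in the interior of its domain, and apply G\"artner--Ellis plus Legendre duality to obtain \eqref{RATE-SC}. The only cosmetic difference is that you tilt to the stable drift $-\gamma$ whereas the paper's Lemma B.1 uses $+\varphi=\sqrt{\theta^2-2b}$ in this case; both choices yield the same limit \eqref{DEFLS}, and your steepness computation and the optimization $a^*=x(\gamma-\theta)$, $\gamma^*=(1+x^2)/(2y)$ match the paper's conclusion.
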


\noindent
We clearly deduce from \eqref{DEFTHETAHAT} that
\begin{equation}
\label{CSTHETAHAT}
\wh{\theta}_T=f(V_T)
\end{equation}
where $f$ is the continuous function defined, for all $x\in \dR$ and for any positive $y$, by
$$
f(x,y)= \frac{x^2 - 1}{2y}.
$$
Hence, an elementary application of the contraction principle 
given in Appendix A, leads to the following corollary, which was previously established in \cite{FP99}
via a much more complicated strategy.

\begin{cor}
\label{C-LDP-ST}
The sequence $(\wh{\theta}_T)$ satisfies an LDP with good rate function
\begin{equation}
\label{RATE-UC}
 I_{\theta}(z)= \left\{ \begin{array}{ccc}
             -{\displaystyle \frac{(z-\theta)^2}{4z}} & \textrm{if} & {\displaystyle z \leq \frac{\theta}{3}},\vspace{1ex}\\
             2z-\theta & \textrm{if} &  {\displaystyle z\geq \frac{\theta}{3}}.
            \end{array} \right.
\end{equation}
\end{cor}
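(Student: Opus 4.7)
The plan is a direct application of the contraction principle from Appendix A to the map $f(x,y) = (x^2-1)/(2y)$, which is continuous on $\dR \times (0,\infty)$. Since the effective domain of $\cI_\theta$ is contained in this open half-plane and $\wh{\theta}_T = f(V_T)$ by \eqref{CSTHETAHAT}, Theorem \ref{T-LDP-SC} yields that $(\wh{\theta}_T)$ satisfies an LDP with good rate function
\[
I_\theta(z) = \inf\bigl\{\cI_\theta(x,y) : y > 0,\ x^2 = 1 + 2yz\bigr\}.
\]
Everything then reduces to a one-variable constrained optimisation parametrised by $y$.

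Substituting $x^2 = 1 + 2yz$ into \eqref{RATE-SC} is designed to make the expression collapse: using $1 - x^2 = -2yz$ and $(1+x^2)^2 = 4(1+yz)^2$, the cross terms recombine into a perfect square and the quantity to minimise becomes
\[
g(y) = \frac{(z-\theta)^2}{2}\, y + \frac{1}{2y} + z,
\]
which is strictly convex on $(0,\infty)$, with unconstrained minimiser $y^* = 1/|z-\theta|$ and minimum $g(y^*) = |z-\theta| + z$. The feasibility condition $x^2 \geq 0$ restricts $y$ to $(0, -1/(2z)]$ when $z < 0$ and is vacuous when $z \geq 0$.

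The decisive step is to determine when $y^*$ itself is feasible. A short sign analysis, using $\theta < 0$, shows that $y^* \le -1/(2z)$ is equivalent to $2|z| \leq |z-\theta|$, which in turn reduces to $z \geq \theta/3$. On this region one has $|z-\theta| = z - \theta$, and the unconstrained minimum gives $I_\theta(z) = 2z - \theta$. When $z < \theta/3$, convexity of $g$ forces the minimiser onto the boundary $y_b = -1/(2z)$, which imposes $x = 0$; substituting $x = 0$ and $y = y_b$ directly into \eqref{RATE-SC} yields, after simplification,
\[
\cI_\theta(0, y_b) = \frac{\theta}{2} - \frac{\theta^2}{4z} - \frac{z}{4} = -\frac{(z-\theta)^2}{4z},
\]
which is the second branch of \eqref{RATE-UC}. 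A check at $z = \theta/3$ confirms that both expressions agree there, giving the announced continuous rate function.

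No serious obstacle arises: the convexity of $g$ and the boundary computation are elementary. The only mild pitfall is carefully tracking the sign of $z - \theta$ when removing the absolute value, since $\theta < 0$ allows $z$ to lie on either side of $\theta$; it is precisely this case split that pins down the threshold $\theta/3$ in the statement.
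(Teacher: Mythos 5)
Your proposal is correct and follows essentially the same route as the paper: contraction along $f(x,y)=(x^2-1)/(2y)$ and a one-variable convex minimisation in $y$ under the constraint $1+2yz\geq 0$, with your $g(y)=\tfrac{(z-\theta)^2}{2}y+\tfrac{1}{2y}+z$ being exactly the paper's function $h$ written in simplified form. The only blemish is a labelling slip at the end (the boundary value $-\,(z-\theta)^2/(4z)$ is the \emph{first} branch of \eqref{RATE-UC}, valid for $z\leq\theta/3$); the computation itself is right.
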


\begin{proof}
The proofs are given is Appendix B.
\end{proof}

\subsection{The unstable case}

Hereafter, we carry out our strategy on the unstable case where the parameter $\theta=0$ 
in \eqref{OUPS}.

\begin{thm}
\label{T-LDP-UC}
The couple $(V_T)$, given by \eqref{COUPLE}, satisfies a weak LDP with speed $T$ and good rate function
$\cI_{\theta}$ given by
\begin{equation}
\label{RATE-US}
 \cI_{0}(x,y)= \left\{ \begin{array}{ccl}
             {\displaystyle \frac{(1+x^2)^2}{8y}}
              & \textrm{if} & y>0,\vspace{1ex}\\
	     + \infty  & \textrm{if} &   y\leq 0.
            \end{array} \right.
\end{equation}
\end{thm}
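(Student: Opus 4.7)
The strategy I would pursue is the one advertised in the introduction: apply the G\"artner--Ellis theorem to the couple $V_T=(X_T/\sqrt{T},S_T/T)$, in analogy with Theorem~\ref{T-LDP-SC}. Since $\theta=0$, the process $X$ is simply a Brownian motion, and everything reduces to computing the logarithmic asymptotics of the joint Laplace functional $\Phi_T(\lambda,\mu)=\dE[\exp(\lambda\sqrt{T}\,X_T+\mu S_T)]$ as $T\to\infty$, and then taking a Legendre transform.

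To compute $\Phi_T$ for $\mu<0$, I would perform a Cameron--Martin--Girsanov change of probability with density proportional to $\exp(-\alpha\int_0^T X_t\,dX_t-(\alpha^{2}/2)S_T)$, where $\alpha=\sqrt{-2\mu}$. Rewriting $\int_0^T X_t\,dX_t=(X_T^2-T)/2$ via It\^o's formula makes the $S_T$-part of the exponent cancel, and under the new measure $X$ becomes a stable Ornstein--Uhlenbeck process with parameter $-\alpha$, so that $X_T$ is centered Gaussian with explicit variance. A routine Gaussian computation then yields $\Phi_T$ in closed form, and dividing by $T$ and letting $T\to\infty$ produces
\[
L(\lambda,\mu)= -\tfrac{1}{2}\sqrt{-2\mu}+\frac{\lambda^2}{2\sqrt{-2\mu}}\qquad(\mu<0),
\]
with $L(0,0)=0$ and $L\equiv+\infty$ elsewhere. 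The Legendre transform $\cI_0=L^\ast$ is then obtained from the first-order conditions $\lambda^\ast=x(1+x^2)/(2y)$ and $\mu^\ast=-(1+x^2)^2/(8y^2)$, which after substitution recover the announced $\cI_0(x,y)=(1+x^2)^2/(8y)$ for $y>0$.

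The main obstacle is that the origin sits on the boundary of the effective domain $\cD_L=\{\mu<0\}\cup\{(0,0)\}$: the Laplace transform $\dE[\exp(\mu S_T)]$ diverges for every $\mu>0$ as soon as $T$ is large enough, simply because $S_T/T$ is of order $T$ in the unstable regime. This destroys exponential tightness of $V_T$ at speed $T$ and explains why only a \emph{weak} LDP is available here, in contrast with the stable case. On the positive side, $L$ is lower semicontinuous and essentially smooth, with $|\nabla L|\to+\infty$ at every finite boundary point of $\cD_L$, and the supremum defining $L^\ast$ is attained in the interior of $\cD_L$ whenever $y>0$; these two facts are enough for the classical G\"artner--Ellis machinery to deliver the upper bound on compact sets via Chebyshev's inequality and the lower bound on open sets via exponential tilting, which is exactly the content of the weak LDP.
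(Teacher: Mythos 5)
Your proposal is correct and follows essentially the same route as the paper: compute the limiting cumulant generating function $L(a,b)$ of $V_T$ via a Girsanov tilt that cancels the $S_T$ term (the paper's keystone Lemma specialised to $\theta=0$; you tilt to a stable OU process whereas the paper tilts to $\varphi=+\sqrt{-2b}$, but both choices yield the same limit $L(a,b)=-\tfrac12\sqrt{-2b}+a^2/(2\sqrt{-2b})$), take the Fenchel--Legendre transform to get $\cI_0$, and invoke the weak form of the G\"artner--Ellis theorem because the origin lies on the boundary of $\cD_L=\{b<0\}$ while $L$ is essentially smooth and lower semicontinuous. Your identification of the effective domain, the optimisers $a^\ast=x(1+x^2)/(2y)$, $b^\ast=-(1+x^2)^2/(8y^2)$, and the reason only a weak LDP is available all match the paper.
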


\noindent
Despite the lack of exponential tightness, it is possible to establish
the following corollary, which was previously proved in \cite{BCS12}
via a much more complex procedure.

\begin{cor}
\label{C-LDP-UT}
The sequence $(\wh{\theta}_T)$ satisfies an LDP with good rate function
\begin{equation}
\label{RATEC}
 I_{0}(z)= \left\{ \begin{array}{ccc}
            {\displaystyle \hspace{-1ex} -\frac{z }{4} } & \text{ if } & z \leq 0, \vspace{1ex}\\
     2z   &\text{ if } & z \geq 0.
            \end{array} \right.
\end{equation}
\end{cor}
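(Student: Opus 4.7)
The plan is to apply the contraction principle to the weak LDP of Theorem \ref{T-LDP-UC} via the continuous map $f(x,y) = (x^2-1)/(2y)$ defined on $\dR \times (0,\infty)$, noting that $\wh{\theta}_T = f(V_T)$ almost surely, and then to compensate for the absence of exponential tightness of $V_T$ by proving exponential tightness of $(\wh{\theta}_T)$ itself. Combined with Lemma 1.2.18 of \cite{DZ98}, these two ingredients will yield the full LDP with the good rate function $I_0$ (the coercivity of $I_0$ being clear from \eqref{RATEC}).

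The rate function $I_0$ will be identified by solving $I_0(z) = \inf\{\cI_0(x,y) : y > 0,\ f(x,y) = z\}$. Substituting $x^2 = 1 + 2yz$ (feasible iff $y \leq 1/(2|z|)$ when $z < 0$), the objective reduces to $(1+yz)^2/(2y)$, a strictly convex function of $y > 0$ whose unconstrained minimizer is $y_\star = 1/|z|$ with critical value $2|z|$. For $z \geq 0$, $y_\star$ is feasible and gives $I_0(z) = 2z$; for $z < 0$, $y_\star$ violates the constraint and the infimum is attained at the boundary point $y = 1/(2|z|)$, $x = 0$, with value $-z/4$, matching \eqref{RATEC}. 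The weak LDP lower bound for $(\wh{\theta}_T)$ transfers immediately via this contraction: for any open $G \subset \dR$, $f^{-1}(G)$ is open in $\dR \times (0,\infty)$, hence in $\dR^2$, and since $S_T > 0$ almost surely one has $\dP(\wh{\theta}_T \in G) = \dP(V_T \in f^{-1}(G))$, to which the weak lower bound of Theorem \ref{T-LDP-UC} applies.

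The main obstacle is the exponential tightness of $(\wh{\theta}_T)$, because $V_T$ is not itself exponentially tight at speed $T$ (its first marginal equals $B_T/\sqrt{T} \sim \cN(0,1)$, whose tails do not depend on $T$). Tightness must therefore be extracted from the rational structure: in the unstable case $X_t = B_t$, It\^o's formula gives $\wh{\theta}_T = M_T/S_T$ with $M_T = \int_0^T B_s\,dB_s$. A Chernoff bound applied to the positive supermartingale $\exp(\lambda M_t - \lambda^2 S_t/2)$ on the event $\{S_T > c\}$, optimized at $\lambda = \pm R$, gives
\begin{equation*}
\dP\bigl(|\wh{\theta}_T| > R,\ S_T > c\bigr) \leq 2\exp\!\bigl(-cR^2/2\bigr),
\end{equation*}
while the Cameron--Martin identity $\dE[\exp(-\beta S_T)] = (\cosh(\sqrt{2\beta}\,T))^{-1/2}$ combined with Markov's inequality yields $\dP(S_T \leq c) \leq \sqrt{2}\exp(-T^2/(8c))$. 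Balancing by the choice $c = T/(2R)$ produces $\dP(|\wh{\theta}_T| > R) \leq C\exp(-RT/4)$, so exponential tightness is obtained by letting $R \to \infty$, and Lemma 1.2.18 of \cite{DZ98} then closes the argument.
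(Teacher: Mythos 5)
Your lower bound, your evaluation of the infimum defining $I_0$, and your exponential tightness estimate for $(\wh{\theta}_T)$ are all correct (the last is a nice self-contained computation). But the argument as assembled never establishes the large deviation \emph{upper} bound, and that is the genuinely delicate half in the unstable case. Lemma 1.2.18 of \cite{DZ98} upgrades a weak LDP to a full LDP in the presence of exponential tightness, so it requires as input the upper bound for $(\wh{\theta}_T)$ over compact subsets of $\dR$. This does not follow from Theorem \ref{T-LDP-UC} by contraction: for a compact $K\subset\dR$ the preimage $f^{-1}(K)$ is closed but unbounded in $\dR\times(0,\infty)$ (for instance $f^{-1}(\{z\})=\{(x,y):x^2=1+2yz,\ y>0\}$ contains points with $y$ arbitrarily large), and the weak upper bound for $(V_T)$ is only available on compact sets. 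Exponential tightness of $\wh{\theta}_T$ controls the escape of $\wh{\theta}_T$ to infinity, not the escape of $V_T$ to infinity inside $f^{-1}(K)$, so it cannot close this gap.

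Nor can your supermartingale machinery produce the sharp right tail: balancing $\dP(M_T\ge cS_T,\ S_T\ge s)\le e^{-c^2 s/2}$ against $\dP(S_T\le s)\le\sqrt{2}\,e^{-T^2/(8s)}$ yields at best $\dP(\wh{\theta}_T\ge c)\le C e^{-cT/4}$, whereas the claimed rate for $c>0$ is $e^{-2cT}$, so the exponent is off by a factor of $8$. This is precisely where the paper does extra work: it decomposes $\dP(\wh{\theta}_T\ge c)$ as $\dP(V_T\in\Delta_{c,\alpha})+\dP(\wh{\theta}_T\ge c,\ |X_T|/\sqrt{T}>\alpha)$, applies the weak LDP to the compact set $\Delta_{c,\alpha}$ to get the exponent $2c$, and shows the second term is negligible via an exponential Chebyshev bound based on the auxiliary cumulant generating function $\Lambda_T$ of $(X_T^2/T,S_T/T)$ and Lemma \ref{L-LCGFXcarre}, with $\alpha$ chosen large enough that the resulting exponent beats $2c$. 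You would need to add this (or an equivalent) truncation argument to complete the upper bound; the boundary case $c=0$ also requires the separate one-line computation $\dP(\wh{\theta}_T\ge 0)=\dP(X_T^2\ge T)$.
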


\begin{proof}
The proofs are given is Appendix B.
\end{proof}

\subsection{The explosive case}

Finally, we deal with the more complicated explosive case where the parameter $\theta$ is
positive in \eqref{OUPS}.
\begin{thm}
\label{T-LDP-EC}
The couple $(V_T)$, given by \eqref{COUPLE}, satisfies a weak LDP with speed $T$. More precisely,
\begin{enumerate}[(i)]
\item $\!$ Upper bound: For any compact set $F  \subset \dR^2$,
\begin{equation}
\label{LDPUBE}
 \limsup_{T\rightarrow\infty}\frac{1}{T}\log \dP\bigl(V_T\in F\bigr)\leq
-\inf_{(x,y)\in F}\cI_{\theta}(x,y).
\end{equation} 
\item $\!$ Lower bound: For any open set $G \subset \dR^2$,
\begin{equation}
\label{LDPLBE}
-\inf_{(x,y)\in G \cap \cF}\cI_{\theta}(x,y)\leq \liminf_{n\rightarrow\infty}\frac{1}{T}\log \dP\bigl(V_T\in G\bigr),
\end{equation}
\end{enumerate}
where $\cI_{\theta}$ is the good rate function given by
\begin{equation}
\label{RATE-EC}
 \cI_{\theta}(x,y)= \left\{ \begin{array}{ccl}
             {\displaystyle \frac{\theta(1 - x^2 + \theta y)}{2} +\frac{(1+x^2)^2}{8y}}
              & \textrm{if} & \displaystyle{ 0<y<\frac{1}{2\theta}(1+x^2),}\\
              \theta & \textrm{if} & y \geq {\displaystyle \frac{1}{2\theta}(1+x^2),}\vspace{2ex} \\
	     + \infty  & \textrm{if} &   y\leq 0,
            \end{array} \right.
\end{equation}
and $\cF$ is the set of exposed points of $\cI_{\theta}$ defined by
\begin{equation}
 \label{EQ-cF-EC}
 \cF = \Bigl\{ (x,y) \in \mathbb{R}^2 \textrm{ such that } 0 <y < \frac{1}{2\theta}(1+x^2) \Bigr\}.
\end{equation}
\end{thm}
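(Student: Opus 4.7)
The plan is to apply the G\"artner-Ellis theorem to the couple $V_T$, paralleling the stable and unstable cases treated above but now coping with the explosive dynamics. The first step is to compute the normalized cumulant generating function
\[
L_T(a,b) = \frac{1}{T} \log \dE\bigl[\exp\bigl(a\sqrt{T}\, X_T + b\, S_T\bigr)\bigr]
\]
in closed form, via a Girsanov change of measure combined with the It\^o identity $\int_0^T X_t\, dB_t = \tfrac{1}{2}(X_T^2 - T) - \theta S_T$, which reduces the computation to a Gaussian integral in $X_T$. Passing to the limit produces $L_T(a,b) \to L(a,b)$ on some effective domain $\mathcal{D}_L$. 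The novel feature of the explosive case is that $\mathcal{D}_L$ is bounded: since $X_T$ is of order $e^{\theta T}$ and $S_T$ of order $e^{2\theta T}$, the exponent $a\sqrt{T}\, X_T + b\, S_T$ can be tamed only when $(a,b)$ lies in a specific region, and $L$ fails to be steep at the corresponding boundary.

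Once $L$ is identified, the upper bound \eqref{LDPUBE} on compact sets follows from the classical Chebyshev argument, the rate function being the Legendre-Fenchel transform $L^*$. A careful but elementary computation will show that $L^*$ coincides with $\cI_{\theta}$ as given in \eqref{RATE-EC}: on $\cF$ the supremum defining $L^*(x,y)$ is attained at an interior critical point $(a^*,b^*)$ of $\mathcal{D}_L$, yielding the analytic expression; on $\{y \geq (1+x^2)/(2\theta)\}$ the supremum is reached only in the limit as $(a,b)$ approaches the boundary of $\mathcal{D}_L$, and saturates at the constant value $\theta$. A direct substitution confirms that both formulas match along the parabola $y = (1+x^2)/(2\theta)$, so $\cI_{\theta}$ is continuous across the two regimes.

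For the lower bound \eqref{LDPLBE}, G\"artner-Ellis provides the estimate only at exposed points of $L^*$. The strategy is to verify that this exposed set coincides with $\cF$: for each $(x,y) \in \cF$, solving the critical point equations gives a unique $(a^*,b^*)$ strictly inside $\mathcal{D}_L$, and the strict concavity of $L$ there supplies an exposing hyperplane. Conversely, no point of the saturated region $\{y \geq (1+x^2)/(2\theta)\}$ can be exposed, since the gradient of $L$ never reaches the limiting values that would parametrise such $(x,y)$.

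The main obstacle is the careful handling of the boundary of $\mathcal{D}_L$. It is there that non-steepness manifests, the two pieces of $\cI_{\theta}$ meet, and one must verify that the lower bound truly cannot be improved beyond $\cF$. Exponential tightness fails in the explosive regime, consistent with $X_T/\sqrt{T}$ being typically of order $e^{\theta T}/\sqrt{T}$, so the weak LDP with its restricted lower bound is the best one can obtain.
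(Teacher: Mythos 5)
Your proposal follows essentially the same route as the paper: compute $L_T$ in closed form via a Girsanov change of measure (with $\varphi=-\sqrt{\theta^2-2b}$ in the explosive case), pass to the limit $L$ on its effective domain, and invoke the weak form of the G\"artner--Ellis theorem, the lower bound being restricted to the exposed points $\cF$ because $L$ is not essentially smooth and the origin lies on the boundary of $\cD_L$. One small correction: $\cD_L=\{(a,b)\in\dR^2 : b<0\}$ is a half-plane, not a bounded set; the relevant pathology is not boundedness of the domain but the failure of steepness of $L$ at the origin, which is exactly why only the weak lower bound on $G\cap\cF$ survives.
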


\begin{rem}
\label{REM-TthetaC0}
 Let us remark that $\cI_{\theta}$ is a continuous function on $\mathbb{R} \times \mathbb{R}^{*}_+$ and a 
 constant function on $(\mathbb{R} \times \mathbb{R}^{*}_+) \setminus \mathcal{F}$. Consequently, we 
 are able to precise \eqref{LDPLBE}: For any open set $G \subset \dR^2$ such that $G \cap \cF \neq \emptyset$,
 $$\inf_{(x,y)\in G \cap \cF}\cI_{\theta}(x,y) = \inf_{(x,y)\in G }\cI_{\theta}(x,y).$$
\end{rem}

\noindent
Despite the weak large deviation result of Theorem \ref{T-LDP-EC}, it is possible to establish
the following corollary, which was previously proved in \cite{BCS12}
via a much more complex procedure.

\begin{cor}
\label{C-LDP-EC}
The sequence $(\wh{\theta}_T)$ satisfies an LDP with good rate function
\begin{equation}
\label{RATE-ET}
 I_{\theta}(z)= \left\{ \begin{array}{ccc}
            {\displaystyle \hspace{-1ex} -\frac{(z-\theta)^2 }{4z} } & \text{ if } & z \leq -\theta, \vspace{1ex}\\
     \theta   &\text{ if } & |z| < \theta, \vspace{1ex}\\
      0  &\text{ if } & z = \theta, \vspace{1ex}\\
      2z-\theta  &\text{ if } & z > \theta.
            \end{array} \right.
\end{equation}
\end{cor}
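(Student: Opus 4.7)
The plan is to view $\wh{\theta}_T = f(V_T)$ with $f(x,y) = (x^2-1)/(2y)$ continuous on $\dR \times (0,\infty)$, and to carry out a careful contraction from Theorem \ref{T-LDP-EC}. Because that theorem only gives a weak LDP (and $V_T$ is not exponentially tight in the explosive regime), the argument has to be split into three parts: the computation of the contracted rate, the lower bound via exposed points together with a.s.\ convergence at $z = \theta$, and the upper bound via truncation of $V_T$ plus direct tail bounds.

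For the identification of the rate, I would parametrize the fibre $\{f = z\}$ by $x^2 = 1 + 2yz$, noting that $y \in (0, -1/(2z)]$ when $z < 0$ and $y > 0$ otherwise. On Region 1 of $\cI_\theta$, the rate then collapses to
\[
 g_1(y,z) = \frac{y(\theta-z)^2}{2} + \frac{1}{2y} + z,
\]
while on Region 2 it equals the constant $\theta$. A case analysis on $z$ locates the infimum: for $z > \theta$, the interior optimum $y^\star = 1/(z-\theta)$ lies in $\cF$ and gives value $2z-\theta$; for $z < -\theta$, the constraint $y \leq -1/(2z)$ is active, and the boundary point $(x,y) = (0, -1/(2z)) \in \cF$ gives $-(z-\theta)^2/(4z)$; for $-\theta \leq z < \theta$ the infimum $\theta$ is realized in Region 2. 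This recovers $I_\theta(z)$ of \eqref{RATE-ET} for every $z \neq \theta$.

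For the lower bound on an open $G \subset \dR$: when $\theta \in G$, the strong law $\wh{\theta}_T \to \theta$ yields $\dP(\wh{\theta}_T \in G) \to 1$, so $\liminf \tfrac{1}{T} \log \dP(\wh{\theta}_T \in G) \geq 0 = -I_\theta(\theta)$; when $\theta \notin G$, I would apply \eqref{LDPLBE} to the open preimage $f^{-1}(G)$ and invoke Remark \ref{REM-TthetaC0} to replace $\inf_{f^{-1}(G) \cap \cF}$ by $\inf_{f^{-1}(G)}$, which by the previous paragraph equals $\inf_{z \in G} I_\theta(z)$.

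For the upper bound on a closed $C \subset \dR$ with $\theta \notin C$, I would choose $\delta > 0$ with $C \subset (-\infty,\theta-\delta] \cup [\theta+\delta,\infty)$ and bound each tail separately. Each event lifts to a closed set in the $(U,W) = (X_T/\sqrt{T}, S_T/T)$-plane, for instance $\{\wh{\theta}_T \geq z_+\} = \{U_T^2 \geq 1 + 2z_+ W_T\}$; to use the weak upper bound \eqref{LDPUBE} one truncates to a compact window $\{|U| \leq L,\ a \leq W \leq b\}$ and bounds the complement by direct exponential tail estimates for $U_T$ and $W_T$, obtainable from the explicit joint Laplace transform of $(X_T, S_T)$ for the OU process. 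The main obstacle is precisely this truncation step: the weak LDP of $V_T$ alone does not supply the required quantitative one-sided tail controls in the explosive regime, so they must be established independently; once in hand, sending $L, b \to \infty$ and $a \to 0^+$ and optimizing the compact bound recovers $\inf_C I_\theta$, completing the corollary.
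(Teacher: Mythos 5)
Your identification of the contracted rate (including the observation that the contraction value at $z=\theta$ is $\theta$ rather than $0$, so that $I_\theta(\theta)=0$ must come from the almost sure convergence) and your treatment of the lower bound via \eqref{LDPLBE} and Remark \ref{REM-TthetaC0} match the paper's computations and are sound. The architecture of the upper bound --- reduce to the two one-sided tails, lift to the $(X_T/\sqrt{T},S_T/T)$-plane, apply the weak upper bound \eqref{LDPUBE} on a compact truncation, and control the complement separately --- is also the paper's strategy. The gap is in how you propose to control the complement of the truncation. In the explosive regime $X_T\sim e^{\theta T}$ and $S_T/T\to\infty$ almost surely, so the marginal tails $\dP(|X_T|/\sqrt{T}>L)$ and $\dP(S_T/T>b)$ tend to $1$ for every fixed $L$ and $b$: there are no ``direct exponential tail estimates for $U_T$ and $W_T$'' to be had, and a union bound over marginal escape events fails outright. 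This is precisely why $(V_T)$ is not exponentially tight and why the full LDP for $V_T$ is false.

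What is actually needed, and what the paper does, is a \emph{joint} Chernoff bound on the intersection of the rare deviation event with the escape event. For $c>\theta$ one writes
\begin{equation*}
\dP\Bigl(\wh{\theta}_T \geq c,\ \tfrac{|X_T|}{\sqrt{T}}>\alpha\Bigr)
=\dP\bigl(X_T^2-2cS_T\geq T,\ X_T^2>\alpha^2 T\bigr)
\leq \exp\Bigl(-T\bigl((\lambda+\mu\alpha^2)-\Lambda_T(\lambda+\mu,-2\lambda c)\bigr)\Bigr)
\end{equation*}
for positive $\lambda,\mu$, where $\Lambda_T$ is the normalized cumulant generating function of $(X_T^2/T,S_T/T)$; one then checks via Lemma \ref{L-LCGFXcarre} that a specific choice (the paper takes $\lambda=(c^2-\theta^2)/4c$, $\mu=(c-\theta)^2/8c$) puts the argument inside $\cD_\Lambda$, so that for $\alpha$ large the bound beats $e^{-(2c-\theta)T}$. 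The analogous joint bound with the event $\{S_T/T>\alpha\}$ handles $0\leq c<\theta$ (for $c<0$ no truncation is needed, since $\{\wh{\theta}_T\leq c\}$ already lifts to a compact set). Your proposal correctly flags the truncation as the main obstacle, but the mechanism you suggest for resolving it would not work; you need the two-parameter exponential Markov inequality on the joint event, exploiting the deviation constraint $X_T^2-2cS_T\geq T$ (respectively $\leq T$) inside the expectation, together with the explicit effective domain $\cD_\Lambda$.
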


\begin{proof}
The proofs are given is Appendix B.
\end{proof}


\section{Concentration inequalities.}
\setcounter{equation}{0}
\label{S-CI}


The concentration inequalities for the sequence $(\wh{\theta}_T)$
are gathered together as follows. We refer the reader to \cite{BDR15}
for a recent book on concentration inequalities for sums and martingales.

\begin{thm}
\label{T-CI}
We have for all positive real numbers $T$ and for any positive $x$,
\begin{equation}
\label{CIGEN}
\dP(| \wh{\theta}_T - \theta | \geq x) \leq 2 
\exp\Bigl( -\frac{x^2}{2} h_T(y_x)\Bigr)
\end{equation}
where
\begin{equation}
\label{RATECI}
 h_T(y)=
 \left\{ \begin{array}{ccc}
 {\displaystyle  \frac{-\theta T y + \log(y+2) -\log(2(y+1))}{x^2 +\theta^2y(y+2)}} 
 & \text{ if } & \theta< 0, \vspace{1ex}\\
 {\displaystyle  \frac{ T y - \log 2}{x^2 +y^2}} 
 & \text{ if } & \theta= 0, \vspace{1ex}\\
 {\displaystyle  \frac{\theta T(y+2) + \log y -\log(2(y+1))}{x^2 +\theta^2y(y+2)}} 
 & \text{ if } & \theta> 0, \vspace{1ex}\\
            \end{array} \right.
\end{equation}
and
\begin{equation*}
y_x= \argmax{y>0} h_T(y).
\end{equation*}
\end{thm}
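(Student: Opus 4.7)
My plan is to reduce $\widehat{\theta}_T - \theta$ to the ratio $M_T/S_T$, control each tail by an exponential Markov inequality, and identify the resulting exponent with $h_T(y)$ via the explicit Laplace transform of $(X_T^2, S_T)$ for the Ornstein--Uhlenbeck process.

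First, applying Itô's formula to $X_t^2$ gives $X_T^2 - T - 2\theta S_T = 2 M_T$, where $M_T = \int_0^T X_t\,dB_t$. Hence $\widehat{\theta}_T - \theta = M_T / S_T$ and
\begin{equation*}
\dP(|\widehat{\theta}_T - \theta| \geq x) \leq \dP(M_T \geq x S_T) + \dP(-M_T \geq x S_T),
\end{equation*}
so it is enough to bound each one-sided tail by $\exp(-x^2 h_T(y)/2)$ for every admissible $y$. For the upper tail, for any $\lambda > 0$ Chernoff gives $\dP(M_T \geq x S_T) \leq \dE[\exp(\lambda(M_T - xS_T))]$, and the previous display rewrites the right-hand side as $e^{-\lambda T/2}\,\dE[\exp(\tfrac{\lambda}{2} X_T^2 - \lambda(\theta+x) S_T)]$. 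Solving the Riccati equation for the Feynman--Kac semigroup of the Ornstein--Uhlenbeck process---equivalently, Girsanov followed by the Cameron--Martin formula---yields the explicit expression
\begin{equation*}
\dE[\exp(\alpha X_T^2 - \beta S_T)] = \frac{e^{-\theta T/2}}{\sqrt{\cosh(bT) - (p/b)\sinh(bT)}}, \quad p = 2\alpha + \theta,\ b = \sqrt{\theta^2 + 2\beta},
\end{equation*}
on the natural admissibility domain; the lower tail follows by the symmetric substitution $\lambda \mapsto -\lambda$, contributing the prefactor $2$.

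The next step is to re-parameterize $\lambda$ by the variable $y > 0$ appearing in the theorem. The substitution $b = |\theta|(y+1)$ in the stable and explosive regimes (and the limiting choice $b = y$ for $\theta = 0$) converts the denominator $x^2 + \theta^2 y(y+2)$ of $h_T$ into $x^2 + b^2 - \theta^2$, and the numerator into the unified form $T(b+\theta) + \log((b-\theta)/(2b))$, using that $b \pm \theta > 0$ in the admissible range. Writing
$\cosh(bT) - (p/b)\sinh(bT) = ((b-p)e^{bT} + (b+p)e^{-bT})/(2b)$
and bounding below by the positive exponential with the correct sign of $T$-linear growth produces a Chernoff exponent of the form $-\tfrac{(p+b)T}{2} + \tfrac{1}{2}\log\tfrac{2b}{b-p}$. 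Choosing $\lambda = \lambda(y)$ so that this quantity is dominated by $-x^2 h_T(y)/2$ delivers the one-sided bound; taking the supremum over $y > 0$ then produces $y_x$.

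The main obstacle is the algebraic identification in the last step, together with a careful case analysis driven by the sign of $\theta$. The latter controls both the admissibility range of the Laplace parameters (so that $b$ is real and $\cosh(bT) - (p/b)\sinh(bT)$ is positive) and which of the two exponentials $(b \mp p)e^{\pm bT}$ is dominant; this is precisely what produces the three visibly different formulas for $h_T$---most strikingly the appearance of $\theta T(y+2)$ in place of $-\theta T y$ and of $\log y$ in place of $\log(y+2)$ in the explosive regime, reflecting a reversal in the relevant tail. One also has to check that, in each regime, the correspondence $\lambda \leftrightarrow y$ sweeps out the full range $y \in (0,\infty)$ claimed in the theorem so that $y_x$ is the genuine best parameter.
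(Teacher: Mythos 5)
Your overall reduction ($\wh{\theta}_T-\theta=M_T/S_T$, a factor $2$ from the two tails, then an exponential Markov bound) starts the same way as the paper, but from there you take a genuinely different route: you apply Chernoff directly to $\lambda(M_T-xS_T)=\tfrac{\lambda}{2}X_T^2-\tfrac{\lambda T}{2}-\lambda(\theta+x)S_T$ and invoke the exact joint Laplace transform of $(X_T^2,S_T)$, whereas the paper uses the supermartingale $W_T(a)=\exp(aM_T-\tfrac{a^2}{2}S_T)$ together with H\"older's inequality (the De la Pe\~na device) to bound \emph{both} tails by the single, tail-symmetric quantity $\inf_{p>1}\bigl(\dE[\exp(-(p-1)\tfrac{x^2}{2}S_T)]\bigr)^{1/p}$. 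This difference is not cosmetic: the H\"older exponent $p$ is precisely what produces the denominator $x^2+\theta^2y(y+2)=p\,x^2$ of $h_T$, and the numerator is the log-Laplace bound for $S_T$ alone. Your bound, by contrast, has the form $\exp\bigl(-\tfrac{(p+b)T}{2}+\tfrac12\log\tfrac{2b}{b-p}\bigr)$ with $p=\lambda+\theta$, with no analogue of the division by $p$, and it is different for the two tails (it actually recovers the sharp one-sided LDP rate $2x-\theta$ for the upper tail, so asymptotically it is \emph{stronger} than \eqref{CIGEN} there).

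The gap is the step you yourself flag as ``the algebraic identification'': it does not hold as stated. Your substitution $b=|\theta|(y+1)$ correctly rewrites $h_T$, but the claim that one can choose $\lambda(y)$ so that your one-sided exponent is dominated by $-\tfrac{x^2}{2}h_T(y)$ fails pointwise in $y$. Take $\theta=0$ and the lower tail: with the matched parameter $b=y$, i.e. $\lambda=y^2/2x$, your exponent is $-\tfrac{yT(2x-y)}{4x}+\tfrac12\log\tfrac{4x}{2x+y}$, and
\begin{equation*}
\frac{x^2Ty}{2(x^2+y^2)}-\frac{yT(2x-y)}{4x}=\frac{Ty^2(x-y)^2}{4x(x^2+y^2)}>0 \quad\text{for } y\neq x,
\end{equation*}
so at $y=y_x$ (which is strictly larger than $x$ here) the leading term of your bound is strictly weaker than $\tfrac{x^2}{2}h_T(y_x)$. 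Moreover the correspondence $\lambda\leftrightarrow y$ does not sweep out $(0,\infty)$: admissibility forces $\lambda<2x$, hence $b=\sqrt{2\lambda x}<2x$ for the upper tail, so the theorem's optimizer $y_x$ may be unreachable. What would remain true is a comparison of the two \emph{optimized} bounds, $\sup_\lambda(\cdot)\geq \tfrac{x^2}{2}\sup_y h_T(y)$, but that is a different statement requiring its own (nontrivial, all-$T$) proof which you do not supply. To obtain the exact function \eqref{RATECI} you should instead follow the supermartingale--H\"older route: bound $\dP(\pm M_T\geq xS_T)\leq\dE[(W_T(\pm x))^{1/q}\exp(-\tfrac{x^2}{2q}S_T)\rI_{A_T}]\leq(\dE[\exp(-(p-1)\tfrac{x^2}{2}S_T)])^{1/p}$, and then bound the Laplace transform of $S_T$ via the Girsanov computation of Lemma \ref{L-KLEMMA}, keeping only the dominant exponential in $1+(\vp-\theta)\sigma_T^2$; the reparametrizations $y=\sqrt{(p-1)x^2}$ ($\theta=0$) and $y=-(\vp+\theta)/\theta$ ($\theta\neq0$) then yield \eqref{RATECI} exactly.
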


\begin{cor}
\label{C-CI-SC}
In the stable case $\theta < 0$, we have for all positive real numbers $T$ and for any positive $x$,
\begin{equation}
\label{CI-SC}
\dP(| \wh{\theta}_T -\theta | \geq x) \leq 2 
\exp\Bigl( -\frac{T^2x^2}{4(\log 2 -\theta T+ \sqrt{T^2x^2 -2\theta T \log 2+(\log 2)^2})}\Bigr).
\end{equation}
In particular, as soon as $0< x \leq - \theta$,
\begin{equation*}
\dP(| \wh{\theta}_T - \theta | \geq x) \leq 2 
\exp\Bigl( -\frac{T^2x^2}{8(-\theta T +\log 2)}\Bigr),
\end{equation*}
while, for any $x>-\theta$,
\begin{equation*}
\dP(| \wh{\theta}_T - \theta | \geq x) \leq 2 
\exp\Bigl( -\frac{T^2x^2}{4(T(x-\theta) + 2\log 2)}\Bigr).
\end{equation*}
\end{cor}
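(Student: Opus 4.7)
The starting point is Theorem \ref{T-CI}, which in the stable case already gives $\dP(|\wh{\theta}_T - \theta| \geq x) \leq 2\exp\bigl(-\tfrac{x^2}{2}h_T(y_x)\bigr)$. Since $y_x$ maximises $h_T(\cdot)$, one has $h_T(y_x) \geq h_T(y)$ for every $y > 0$, so the whole task reduces to exhibiting a single well-chosen $y$ and bounding $h_T$ from below by something that can be optimised in closed form. The first simplification is to use the elementary inequality $\frac{y+2}{2(y+1)} = \frac{1}{2} + \frac{1}{2(y+1)} \geq \frac{1}{2}$, which yields $\log(y+2) - \log(2(y+1)) \geq -\log 2$ for all $y > 0$, and hence, when $\theta < 0$,
\[
h_T(y) \;\geq\; g(y) \;:=\; \frac{-\theta T y - \log 2}{x^2 + \theta^2 y(y+2)}.
\]

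The equation $g'(y)=0$ reduces, after multiplying through by $\theta^2$, to a quadratic in $y$ whose unique positive root is
\[
y^* \;=\; \frac{\log 2 + R}{-\theta T}, \qquad R \;:=\; \sqrt{T^2 x^2 - 2\theta T \log 2 + (\log 2)^2}.
\]
Substituting back is where one needs the expression to collapse. Writing $S := \log 2 - \theta T$, a direct computation gives $-\theta T y^* - \log 2 = R$ and $\theta^2(y^*+1)^2 = (S+R)^2/T^2$, so the denominator of $g(y^*)$ is $x^2 - \theta^2 + (S+R)^2/T^2$. The key identity $R^2 - S^2 = T^2(x^2 - \theta^2)$, which is immediate from the definitions, then factors this as $(R+S)(R-S)/T^2 + (R+S)^2/T^2 = 2R(R+S)/T^2$, so that $g(y^*) = T^2/(2(R + \log 2 - \theta T))$. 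Plugging into Theorem \ref{T-CI} this produces precisely the main bound \eqref{CI-SC}.

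For the two specialised inequalities only upper bounds on $R$ are needed. A direct squaring gives $(Tx+\log 2)^2 - R^2 = 2T\log 2\,(x+\theta)$, so $R \leq Tx + \log 2$ whenever $x \geq -\theta$; inserted in the main bound this yields $\log 2 - \theta T + R \leq T(x-\theta) + 2\log 2$ and hence the inequality stated for $x > -\theta$. For $0 < x \leq -\theta$ one simply uses $Tx \leq -\theta T$ in that last denominator to get $T(x-\theta) + 2\log 2 \leq 2(-\theta T + \log 2)$, which produces the remaining bound. The whole derivation is essentially routine algebra; the only slightly nonobvious step is recognising that the crude lower bound $-\log 2$ on the logarithmic term is still tight enough for the quadratic to have roots involving precisely the square root $R$ appearing in the statement.
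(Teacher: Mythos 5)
Your derivation of the main bound \eqref{CI-SC} is correct and is essentially the paper's own argument: the paper likewise replaces $\log(y+2)-\log(2(y+1))$ by its infimum $-\log 2$, maximizes the resulting rational function $\ell_T(y)=(-\theta Ty-\log 2)/(x^2+\theta^2 y(y+2))$ (your $g$), and plugs the maximizer $y^*=-(\log 2+R)/(\theta T)$ back into \eqref{CIGEN}. Your algebra for the collapse of $g(y^*)$ to $T^2/(2(R+\log 2-\theta T))$ checks out, and is in fact more detailed than the paper, which stops at ``immediately leads to \eqref{CI-SC}'' and gives no argument at all for the two specialized inequalities.

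There is, however, one step that fails in your treatment of the case $0<x\leq-\theta$. You reach the bound with denominator $4(T(x-\theta)+2\log 2)$ only via the inequality $R\leq Tx+\log 2$, which, as you yourself note, requires $x\geq-\theta$: for $x<-\theta$ one has $(Tx+\log 2)^2-R^2=2T\log 2\,(x+\theta)<0$, so that inequality reverses. Chaining through $T(x-\theta)+2\log 2\leq 2(-\theta T+\log 2)$ therefore does not establish the claim on the range $0<x<-\theta$. The repair is immediate and bypasses the intermediate bound altogether: for $0<x\leq-\theta$ one has
$$
R^2-(\log 2-\theta T)^2=T^2(x^2-\theta^2)\leq 0,
$$
hence $R\leq\log 2-\theta T$ and $\log 2-\theta T+R\leq 2(\log 2-\theta T)$, which inserted into \eqref{CI-SC} gives the denominator $8(-\theta T+\log 2)$ directly. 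With that one substitution your proof is complete.
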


\begin{cor}
\label{C-CI-UC}
In the unstable case $\theta=0$, we have for all positive real numbers 
$T$ and for any positive $x$,
\begin{equation}
\label{CI-UC}
\dP(| \wh{\theta}_T | \geq x) \leq 2 
\exp\Bigl( -\frac{T^2x^2}{4(\log 2 + \sqrt{T^2x^2 +(\log 2)^2})}\Bigr).
\end{equation}
In particular,
\begin{equation*}
\dP(| \wh{\theta}_T | \geq x) \leq 2 
\exp\Bigl( -\frac{T^2x^2}{4(Tx +2\log 2)}\Bigr).
\end{equation*}
\end{cor}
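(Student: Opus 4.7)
The plan is to specialize Theorem~\ref{T-CI} to $\theta = 0$ and carry out explicitly the one-dimensional optimization over $y>0$ that the theorem leaves implicit. In this case $h_T$ reduces to
$$
h_T(y) = \frac{T y - \log 2}{x^2 + y^2},
$$
so the entire content of the corollary is the computation of $y_x = \argmax{y>0} h_T(y)$ and of the value $h_T(y_x)$.

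First I would differentiate and set the numerator of $h_T'(y)$ to zero, which yields the quadratic $T y^2 - 2 y \log 2 - T x^2 = 0$ whose unique positive root is
$$
y_x = \frac{\log 2 + \sqrt{(\log 2)^2 + T^2 x^2}}{T}.
$$
That this critical point is the maximum on $(0,\infty)$ is immediate: $h_T$ is negative near $0$, positive for $y > (\log 2)/T$, and tends to $0$ at infinity, so its supremum is attained at the unique interior critical point. From the first-order equation one reads off $T y_x - \log 2 = \sqrt{(\log 2)^2 + T^2 x^2}$, which already handles the numerator of $h_T(y_x)$.

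The main obstacle is evaluating the denominator $x^2 + y_x^2$ without drowning in nested radicals. I would avoid expanding $y_x^2$ by brute force and instead use the first-order relation $T y_x^2 = T x^2 + 2 y_x \log 2$ to factor $\sqrt{(\log 2)^2 + T^2 x^2}$ out, obtaining
$$
x^2 + y_x^2 = \frac{2}{T^2}\sqrt{(\log 2)^2 + T^2 x^2}\,\Bigl(\log 2 + \sqrt{(\log 2)^2 + T^2 x^2}\Bigr).
$$
Combining this with the numerator yields $h_T(y_x) = T^2 / \bigl(2\bigl(\log 2 + \sqrt{(\log 2)^2 + T^2 x^2}\bigr)\bigr)$ and, plugged into \eqref{CIGEN}, the bound \eqref{CI-UC}. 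The simplified inequality then follows from the elementary estimate $\sqrt{a^2+b^2} \leq a+b$ for $a,b \geq 0$, applied with $a = \log 2$ and $b = T x$, which turns the denominator into $T x + 2\log 2$.
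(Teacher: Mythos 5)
Your proposal is correct and follows essentially the same route as the paper, which leaves this proof to the reader as being identical to that of Corollary~\ref{C-CI-SC}: one maximizes $h_T(y)=(Ty-\log 2)/(x^2+y^2)$ over $y>0$, finds the critical point $y_x=(\log 2+\sqrt{(\log 2)^2+T^2x^2})/T$, and substitutes into \eqref{CIGEN}. All of your algebra checks out, including the factorization $x^2+y_x^2=\tfrac{2}{T^2}\sqrt{(\log 2)^2+T^2x^2}\bigl(\log 2+\sqrt{(\log 2)^2+T^2x^2}\bigr)$ and the final use of $\sqrt{a^2+b^2}\leq a+b$.
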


\begin{cor}
\label{C-CI-EC}
In the explosive case $\theta>0$, we have for all positive real numbers $T$ and for 
any positive $x$,
\begin{equation}
\label{CI-EC-xpetit}
\dP(| \wh{\theta}_T - \theta | \geq x) \leq 2 
\exp\Bigl( -\frac{T^2x^2\bigl(2\theta T +\log (\log 2)-\log(\theta T+\log 2)\bigr)}{2(T^2x^2+
2\theta T \log 2+(\log 2)^2)}\Bigr).
\end{equation}
\end{cor}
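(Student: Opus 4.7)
The plan is to deduce the corollary directly from Theorem~\ref{T-CI} applied in the case $\theta>0$. Since $y_x$ is by construction the maximizer of $h_T$, for every test point $y^*>0$ one has $h_T(y_x)\geq h_T(y^*)$, and hence
$$
\dP(|\wh{\theta}_T-\theta|\geq x)\leq 2\exp\Bigl(-\frac{x^2}{2}h_T(y^*)\Bigr).
$$
So the whole corollary reduces to exhibiting an explicit $y^*>0$ at which $h_T(y^*)$ coincides with the rational expression sitting inside the exponent of \eqref{CI-EC-xpetit}. No true optimization of $h_T$ is needed, which is precisely what makes the argument tractable.

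The critical step is to guess the right $y^*$. The form of the denominator $T^2x^2+2\theta T\log 2+(\log 2)^2$ in \eqref{CI-EC-xpetit} suggests matching it against $T^2(x^2+\theta^2 y^*(y^*+2))$, which forces $\theta^2 y^*(y^*+2)\,T^2=(\log 2)^2+2\theta T\log 2$. The choice $y^*=\log 2/(\theta T)$ does the job: a direct computation gives $\theta^2y^*(y^*+2)=[(\log 2)^2+2\theta T\log 2]/T^2$, so that the denominator of $h_T(y^*)$ equals $B/T^2$ with $B:=T^2x^2+2\theta T\log 2+(\log 2)^2$.

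Next I would verify that, with the same $y^*$, the numerator of $h_T(y^*)$ collapses to $A:=2\theta T+\log(\log 2)-\log(\theta T+\log 2)$. Expanding $\theta T(y^*+2)=\log 2+2\theta T$, $\log y^*=\log(\log 2)-\log(\theta T)$ and $\log(2(y^*+1))=\log 2+\log(\theta T+\log 2)-\log(\theta T)$, the stray $\log 2$ and $\log(\theta T)$ contributions cancel pairwise, leaving exactly $A$. Combining the two pieces yields $h_T(y^*)=T^2A/B$, and substituting back into the general bound of Theorem~\ref{T-CI} gives precisely \eqref{CI-EC-xpetit}. The only real obstacle is spotting the choice $y^*=\log 2/(\theta T)$; once it is identified, everything else is elementary algebra.
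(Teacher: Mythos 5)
Your proposal is correct and coincides with the paper's own proof: the paper likewise obtains \eqref{CI-EC-xpetit} by substituting the explicit value $y=\log 2/(\theta T)$ into \eqref{CIGEN} (the bound in Theorem \ref{T-CI} is really an infimum over all $y>0$, so any test point is admissible). Your algebraic verification of the numerator and denominator matches the required expression exactly.
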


\section*{Appendix A \\ G\"artner-Ellis theorem and 
the contraction principle.}
\renewcommand{\thesection}{\Alph{section}}
\renewcommand{\theequation}{\thesection.\arabic{equation}}
\setcounter{section}{1}
\setcounter{equation}{0}

The most powerful tool for proving LDP is probably the G\"artner-Ellis theorem.
Let $(V_T)$ be sequence of random vectors of $\dR^d$. Denote by $L_T$ the
normalized cumulant generating function of $V_T$,
$$L_T(a) = \frac{1}{T} \log \dE \left[\exp \left( T \langle a, V_T \rangle \right) \right].$$
The existence of the limiting cumulant generating function
$$
L(a)=\lim_{T \rightarrow \infty} L_T(a)
$$
indicates whether or not $(V_T)$ satisfies an LDP. Denote by $\cD_L$ the effective domain of $L$,
$$
\cD_L = \bigl\{ a \in \dR^d \text{ such that } L(a)< \infty \bigr\}.
$$
Let $I$ be the Fenchel-Legendre transform of $L$,
$$
I(x)=\sup_{a \in \dR^d} \bigl\{ \langle a, x \rangle -L(a) \bigr\}.
$$

\begin{thm}[G\"artner-Ellis]
\label{T-GE}
Assume that the function $L$ exists as an extended real number. Then,
\begin{enumerate}[(i)]
\item $\!$ Upper bound: For any compact set $F  \subset \dR^d$,
\begin{equation}
\label{LDPUBGE}
 \limsup_{T\rightarrow\infty}\frac{1}{T}\log \dP\bigl(V_T\in F\bigr)\leq
-\inf_{x\in F}I(x).
\end{equation} 
\item $\!$Lower bound: For any open set $G \subset \dR^d$,
\begin{equation}
\label{LDPLBGE}
-\inf_{x\in G \cap \cF}I(x)\leq \liminf_{n\rightarrow\infty}\frac{1}{T}\log \dP\bigl(V_T\in G\bigr),
\end{equation}
where $\cF$ is the set of exposed points of $I$ whose exposing hyperplane belongs
to the interior of $\cD_L$. 
\item
If $L$ is an essentially smooth, lower semicontinuous function, 
then the sequence $(V_T)$ satisfies a weak LDP with rate function $I$. If, moreover,
the origin belongs to the interior of $\cD_L$, $(V_T)$ satisfies an LDP with good
rate function $I$.
\end{enumerate}
\end{thm}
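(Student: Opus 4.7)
The proof will follow the classical three-step strategy of the exponential-change-of-measure method: Chebyshev for the compact-set upper bound, exponential tilting for the exposed-point lower bound, and exponential tightness for the promotion to a full LDP in (iii).

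\textbf{Upper bound (i).} For any $a \in \dR^d$ and any $x \in \dR^d$, the exponential Markov inequality gives, for every closed ball $\overline{B}(x,\rho)$,
$$\dP\bigl(V_T \in \overline{B}(x,\rho)\bigr) \leq e^{-T(\langle a, x\rangle - \rho |a|)}\,\dE\bigl[e^{T\langle a, V_T\rangle}\bigr],$$
so $\limsup_T \tfrac{1}{T}\log \dP(V_T \in \overline{B}(x,\rho)) \leq -\langle a, x\rangle + L(a) + \rho|a|$. For a compact $F$ I would cover it by finitely many such balls centered at points $x_i \in F$; on each ball I would choose $a_i$ approximating the supremum defining $I(x_i)$, using a standard truncation when $I(x_i)=+\infty$ (replacing it by an arbitrarily large finite target). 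A union bound over the finite cover, followed by $\rho \to 0$, yields the bound $-\inf_{x \in F} I(x)$.

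\textbf{Lower bound (ii).} Fix an exposed point $x_0 \in G \cap \cF$ with exposing hyperplane $a_0 \in \operatorname{int}\cD_L$, and a ball $B(x_0,\delta) \subset G$. Introduce the tilted probability measure
$$\frac{d\mathbb{Q}_T}{d\dP} = \frac{e^{T\langle a_0, V_T\rangle}}{\dE[e^{T\langle a_0, V_T\rangle}]}.$$
Under $\mathbb{Q}_T$ the normalized log-moment generating function of $V_T$ converges to $L(a_0 + \cdot) - L(a_0)$, which is finite in a neighborhood of the origin because $a_0$ lies in the interior of $\cD_L$. Applying the upper bound already obtained to $\mathbb{Q}_T$ shows that $V_T$ concentrates exponentially under $\mathbb{Q}_T$ on the minimizers of $I(\cdot) - \langle a_0, \cdot\rangle + L(a_0)$; the exposed-point hypothesis pins this set to $\{x_0\}$, so $\mathbb{Q}_T(V_T \in B(x_0,\delta)) \to 1$. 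Undoing the tilt,
$$\dP(V_T \in G) \geq \dP(V_T \in B(x_0,\delta)) \geq e^{-T(\langle a_0, x_0\rangle + \delta|a_0| - L(a_0)) + o(T)}\,\mathbb{Q}_T\bigl(V_T \in B(x_0,\delta)\bigr),$$
and taking $\log$, dividing by $T$, letting $\delta \to 0$, and then taking the supremum over $x_0 \in G \cap \cF$ gives $\liminf_T \tfrac{1}{T}\log \dP(V_T \in G) \geq -\inf_{G\cap \cF} I$.

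\textbf{Promotion to a weak / full LDP (iii).} Essential smoothness of $L$ guarantees, by standard convex duality, that the range of $\nabla L$ on $\operatorname{int}\cD_L$ consists of exposed points of $I$ whose exposing hyperplanes lie in $\operatorname{int}\cD_L$, and that this range covers the whole interior of the effective domain of $I$. Combined with lower semicontinuity of $I$ this removes the restriction to $\cF$ in (ii), yielding a weak LDP. If in addition $0 \in \operatorname{int}\cD_L$, then there exists $M>0$ with $L(\pm M e_i) < \infty$ for every basis vector $e_i$, and exponential Markov in each coordinate gives $\dP(|V_T| \geq R) \leq 2d\,e^{-T(MR - C)}$, i.e.\ exponential tightness; coupling with the weak LDP produces the full LDP with good rate $I$. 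The main obstacle, and the only genuinely delicate point, is the lower bound: the tilting argument only pins $V_T$ to $x_0$ when $x_0$ is an exposed point with exposing hyperplane inside $\cD_L$, which is precisely why $\cF$ appears in (ii) and why essential smoothness is exactly what makes that restriction ultimately harmless.
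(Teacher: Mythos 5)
This statement is not proved in the paper at all: it is recalled in Appendix~A as a classical result, with an explicit pointer to Theorem~2.3.6 of \cite{DZ98}, so there is no in-paper argument to compare against. Your sketch is, in substance, exactly the Dembo--Zeitouni proof (Chernoff bound plus finite covering for the compact upper bound, exponential tilting at an exposed point for the lower bound, Rockafellar duality plus exponential tightness for part (iii)), and as an outline it is correct.

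Two steps are glossed over in a way worth flagging. First, in the lower bound you ``apply the upper bound already obtained to $\mathbb{Q}_T$'' to conclude $\mathbb{Q}_T(V_T\in B(x_0,\delta))\to 1$; but the upper bound you have at that stage holds only for \emph{compact} sets, while $B(x_0,\delta)^c$ is merely closed. You must first upgrade it to closed sets for the tilted laws via their exponential tightness (which does follow from $a_0\in\operatorname{int}\mathcal{D}_L$, the fact you cite), and then argue separately that the exposed-point property forces $\inf_{B(x_0,\delta)^c\cap K}\bigl(I(\cdot)-\langle a_0,\cdot\rangle+L(a_0)\bigr)>0$ on the relevant compact $K$ --- pointwise strict positivity does not immediately give a positive infimum without this compactness step. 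Second, in (iii) the assertion that the range of $\nabla L$ on $\operatorname{int}\mathcal{D}_L$ ``covers the whole interior of the effective domain of $I$'' is stronger than what Rockafellar's theorem delivers; the correct (and sufficient) statement, which is Lemma~2.3.12 of \cite{DZ98}, is that for every open $G$ one has $\inf_{G\cap\cF}I=\inf_G I$, obtained by approximating an arbitrary point of $G\cap\mathcal{D}_I$ along a segment into the relative interior of $\mathcal{D}_I$ and using convexity of $I$. Neither issue changes the architecture of the proof, but both are the genuinely technical points of the theorem and should not be waved through.
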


\noindent
We refer the reader to the excellent book \cite{DZ98} for more insight 
on the theory of large deviations. In particular, the G\"artner-Ellis is given in Theorem 2.3.6
of \cite{DZ98}. Another useful tool is the contraction principle which ensures that an LDP
remains valid by continuous mapping, see Theorem 4.2.1 of \cite{DZ98}.

\begin{thm}[Contraction principle]
\label{T-CP}
Assume that a sequence of random vectors $(V_T)$ of $\dR^d$ satisfies 
an LDP with good rate function $I$, and that
$A_T=f(V_T)$
where $f$ is a continuous function from $\dR^d$ to $\dR^\delta$. Then, $(A_T)$ also satisfies an LDP with 
good rate function $J$ given, for all $y \in \dR^{\delta}$, by
\begin{equation}
\label{CONTRACT}
J(y)=\inf \bigl\{ I(x) \text{ with } x \in \dR^d \text{ such that } f(x)=y \bigr\},
\end{equation}
where the infimum over the empty set is taken to be infinite.
\end{thm}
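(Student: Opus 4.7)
The plan is to transfer the LDP from $(V_T)$ to $(A_T)=(f(V_T))$ by pulling sets back under the continuous map $f$, after first checking that the formula \eqref{CONTRACT} produces a good rate function on $\dR^\delta$. The whole argument is purely topological, using nothing probabilistic beyond the hypothesized LDP for $(V_T)$.

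First I would verify that $J$ is a good rate function. The key observation is the set identity
\[
\{y \in \dR^\delta : J(y) \leq \alpha\} = f\bigl(\{x \in \dR^d : I(x) \leq \alpha\}\bigr)
\]
for every $\alpha \geq 0$. The inclusion $\supset$ is immediate from the definition of $J$. For $\subset$, if $J(y) \leq \alpha$, then since $I$ is lower semicontinuous with compact sublevel sets, the infimum defining $J(y)$ is attained at some $x \in f^{-1}(\{y\})$ with $I(x) \leq \alpha$ (using the convention that the infimum over the empty set is $+\infty$, so $J(y)<\infty$ forces $f^{-1}(\{y\})\neq\emptyset$). Granted this identity, the continuous image $f(\{I\leq \alpha\})$ of a compact set is compact; hence the sublevel sets of $J$ are compact and thus closed, which yields lower semicontinuity and goodness at once.

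Next comes the two bounds. For any closed $F \subset \dR^\delta$, continuity of $f$ makes $f^{-1}(F)$ closed in $\dR^d$, and since $A_T \in F$ exactly when $V_T \in f^{-1}(F)$, the LDP upper bound for $(V_T)$ gives
\[
\limsup_{T \to \infty}\frac{1}{T}\log \dP(A_T \in F) \leq -\inf_{x \in f^{-1}(F)} I(x) = -\inf_{y \in F} J(y),
\]
where in the last equality I would split the infimum over $f^{-1}(F)$ as a double infimum, first over $y \in F$ and then over $x \in f^{-1}(\{y\})$. The same substitution handles the lower bound: $f^{-1}(G)$ is open whenever $G$ is open, so
\[
\liminf_{T \to \infty}\frac{1}{T}\log \dP(A_T \in G) \geq -\inf_{x \in f^{-1}(G)} I(x) = -\inf_{y \in G} J(y).
\]

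The main (and essentially only) obstacle is handling the empty-fiber convention carefully: one must ensure the identification $\inf_{x\in f^{-1}(F)} I(x) = \inf_{y\in F} J(y)$ remains valid when $F$ contains points $y$ with $f^{-1}(\{y\}) = \emptyset$. This is precisely why \eqref{CONTRACT} assigns the value $+\infty$ to the infimum over the empty set: such $y$ contribute $+\infty$ to the outer infimum and drop out harmlessly, keeping both sides equal. Everything else reduces to pulling back topological sets through the continuous map $f$ and rearranging infima.
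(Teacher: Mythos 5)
Your proof is correct and is essentially the canonical argument: the paper itself does not prove Theorem \ref{T-CP} but simply cites Theorem 4.2.1 of \cite{DZ98}, and your argument (identifying the sublevel sets of $J$ with the continuous images $f(\{I\leq \alpha\})$ via attainment of the infimum on each fiber, then pulling back closed and open sets and rearranging the infima, with the empty-fiber convention absorbing degenerate cases) is precisely the proof given there. The only step worth spelling out slightly more is the attainment claim, which follows from taking a minimizing sequence inside a compact sublevel set of $I$ and using lower semicontinuity together with closedness of the fiber $f^{-1}(\{y\})$.
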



\section*{Appendix B \\ Proofs of LDP results.}
\renewcommand{\thesection}{\Alph{section}}
\renewcommand{\theequation}{\thesection.\arabic{equation}}
\setcounter{section}{2}
\setcounter{equation}{0}
\label{PRLDP}

Let $L_T$ be the
normalized cumulant generating function of the couple
$$V_T=\Bigl(\frac{X_T}{\sqrt{T}},\frac{S_T}{T}\Bigr)$$
defined, for all $(a,b) \in \dR^2$, by
$$L_T(a,b) = \frac{1}{T} \log \dE \Bigl[\exp \Bigl( a\sqrt{T} X_T + bS_T \Bigr) \Bigr].$$
The proofs of all the LDP results rely on an accurate evaluation of $L_T(a,b)$ as well as on
the existence of the limiting cumulant generating function $L(a,b)$. This is the subject of the following keystone lemma.  
\begin{lem}
\label{L-KLEMMA}
In the stable and unstable cases $\theta \leq 0$, the effective domain of $L$ is given by
\begin{equation}
\label{DOMLSU}
\cD_L = \Bigl\{ (a,b) \in \dR^2 \text{ such that } b < \frac{\theta^2}{2} \Bigr\},
\end{equation}
while, in the explosive case $\theta >0$, the effective domain of $L$ becomes
\begin{equation}
\label{DOMLE}
\cD_L = \Bigl\{ (a,b) \in \dR^2 \text{ such that } b < 0 \Bigr\}.
\end{equation}
Moreover, for any $(a,b) \in \cD_L$, we have whatever the value of $\theta$,
\begin{equation}
\label{DEFLS}
L(a,b) = -\frac{1}{2}\left(\theta  +  \sqrt{\theta^2-2b} \right)
+\frac{a^2}{2(\sqrt{\theta^2-2b} - \theta)}.
\end{equation}
\end{lem}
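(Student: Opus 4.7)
The plan is to compute $L_T(a,b)$ in closed form via a Girsanov change of measure and then to analyze the $T \to \infty$ limit.

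First I set, for any $(a,b)$ with $b\leq \theta^2/2$, $\vartheta := \sqrt{\theta^2-2b}\geq 0$ and introduce the exponential martingale
\begin{equation*}
Z_T^\vartheta = \exp\Bigl((\vartheta-\theta)\int_0^T X_t\,dB_t - \tfrac{(\vartheta-\theta)^2}{2}\,S_T\Bigr).
\end{equation*}
Under $d\mathbb{Q}^\vartheta = Z_T^\vartheta\,d\dP$, Girsanov's theorem turns $X$ into an Ornstein-Uhlenbeck process of drift $\vartheta$ started at $0$. The Ito identity $\int_0^T X_t\,dB_t = \tfrac{1}{2}(X_T^2-T) - \theta\,S_T$, combined with $\vartheta^2 - \theta^2 = -2b$, rewrites $Z_T^\vartheta$ as a functional of $(X_T,S_T)$ alone,
\begin{equation*}
Z_T^\vartheta = \exp\Bigl(\tfrac{\vartheta-\theta}{2}(X_T^2-T) + b\,S_T\Bigr),
\end{equation*}
and inverting the change of measure then yields
\begin{equation*}
\dE\Bigl[e^{a\sqrt{T}\,X_T + b\,S_T}\Bigr] = e^{(\vartheta-\theta)T/2}\,\dE_{\mathbb{Q}^\vartheta}\Bigl[\exp\bigl(a\sqrt{T}\,X_T - \tfrac{\vartheta-\theta}{2}\,X_T^2\bigr)\Bigr].
\end{equation*}

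Under $\mathbb{Q}^\vartheta$, the variable $X_T$ is centered Gaussian with variance $v_\vartheta(T) := (e^{2\vartheta T}-1)/(2\vartheta)$ (with $v_0(T)=T$ as a limit), so the right-hand side reduces to a one-dimensional Gaussian integral. Provided $1+(\vartheta-\theta)v_\vartheta(T)>0$, it evaluates explicitly to
\begin{equation*}
L_T(a,b) = \tfrac{\vartheta-\theta}{2} - \tfrac{1}{2T}\log\bigl(1+(\vartheta-\theta)v_\vartheta(T)\bigr) + \tfrac{a^2\,v_\vartheta(T)}{2\bigl(1+(\vartheta-\theta)v_\vartheta(T)\bigr)}.
\end{equation*}
Letting $T\to\infty$ under the hypothesis $\vartheta>\theta$, the growth $v_\vartheta(T)\sim e^{2\vartheta T}/(2\vartheta)$ forces the middle term to $\vartheta$ and the last to $a^2/\bigl(2(\vartheta-\theta)\bigr)$, which yields exactly formula \eqref{DEFLS}. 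The strict inequality $\vartheta>\theta$ is satisfied precisely in the two regions claimed by the lemma: $\theta\leq 0$ with $b<\theta^2/2$, and $\theta>0$ with $b<0$.

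The remaining obstacle is to rule out the complementary region by showing $L=+\infty$ there. For $\theta>0$ and $0<b<\theta^2/2$ one still has $\vartheta$ real with $0<\vartheta<\theta$, and the same Gaussian identity remains valid; however, the denominator $1+(\vartheta-\theta)v_\vartheta(T)$ eventually becomes negative as $v_\vartheta(T)$ grows, so the Gaussian integral under $\mathbb{Q}^\vartheta$ is infinite and hence so is $\dE[e^{a\sqrt T\,X_T+bS_T}]$. For $b>\theta^2/2$ the linear ODE behind the Feynman-Kac/Riccati representation has complex roots $\theta\pm i\sqrt{2b-\theta^2}$, so its solution develops zeros in finite time and the MGF again diverges for all sufficiently large $T$. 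This sharp transition across $b=0$ in the explosive regime, where the Girsanov identity abruptly ceases to deliver a finite limit, is the main technical subtlety of the proof.
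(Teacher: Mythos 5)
Your proof is correct and follows essentially the same route as the paper: a Girsanov change of drift to $\vartheta=\sqrt{\theta^2-2b}$ chosen to absorb the $bS_T$ term, followed by an explicit one-dimensional Gaussian computation under the new measure and passage to the limit $T\to\infty$. The only differences are cosmetic improvements on your side: you keep the positive root $\vartheta=+\sqrt{\theta^2-2b}$ uniformly in all three regimes (the paper switches to $-\sqrt{\theta^2-2b}$ when $\theta>0$, which your argument shows is unnecessary since $\vartheta>\theta$ already holds throughout the claimed domain), and you explicitly justify that $L=+\infty$ off that domain, a point the paper leaves implicit.
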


\begin{rem}
One can observe that, as soon as $\theta \geq 0$, the origin does not belong to the interior of $\mathcal{D}_L$.
\end{rem}

\begin{proof}
We start with the stable and unstable cases. Using the same lines as in Appendix A of \cite{BR15}, we obtain 
from Girsanov's formula associated with \eqref{OUPS} that
\begin{eqnarray*}
L_T(a,b) &\!=\!& \frac{1}{T} \log \dE_{\vp} \!\left[ \exp\Bigl((\theta - \vp) \!\int_0^T \!\! X_t dX_t -\frac{1}{2}( \theta^2 - \vp^2) S_T 
+a\sqrt{T} X_T + bS_T\Bigr) \right], \\
&\!=\!& \frac{1}{T} \log \dE_{\vp} \!\left[ \exp\Bigl(\frac{(\theta - \vp)}{2} (X_T^2-T) +a\sqrt{T} X_T
+\frac{1}{2}( 2 b  - \theta^2 + \vp^2) S_T\Bigr) \right]
\end{eqnarray*}
where $\dE_{\vp}$ stands for the expectation after the usual change of probability, 
$$
\frac{\dd \dP_{\vp}}{\dd \dP_{\theta}}= 
\exp \left( ( \vp- \theta) \int_0^T X_t dX_t - \frac{1}{2} (\vp^2 - \theta^2) \int_0^T X_t^2\,dt 
\right).
$$
Consequently, if $\theta^2-2b>0$ and $\vp=\sqrt{\theta^2-2b}$, 
$L_T(a,b)$ reduces to
\begin{equation}
\label{DECMGFLS1}
L_T(a,b) = \frac{\vp - \theta }{2}+ \frac{1}{T} \log \dE_{\vp} 
\left[ \exp\Bigl( \Bigl(\frac{\theta - \vp}{2}\Bigr) X_T^2 +a\sqrt{T} X_T \Bigr) \right].
\end{equation}
Under the new probability $\dP_{\vp}$, $X_T$ has an $\cN(0,\sigma_T^2)$ distribution where
\begin{equation}
\label{VARS}
\sigma_T^2= \frac{1}{2 \vp} \Bigl( e^{2 \vp T} - 1 \Bigr).
\end{equation}
Hence, it follows from straightforward Gaussian calculations that
\begin{equation}
\label{DECMGFLS2}
L_T(a,b) = \frac{\vp-\theta}{2} + \frac{a^2 \sigma_T^2}{2\gamma_T}  - \frac{1}{2T} \log \gamma_T
\end{equation}
where $\gamma_T=1+(\vp - \theta) \sigma_T^2$. However, we clearly obtain from \eqref{VARS} that
$$
\lim_{T \rightarrow \infty} \frac{1}{T} \log\sigma_T^2=2\vp, \hspace{0.5cm}
\lim_{T \rightarrow \infty} \frac{\gamma_T}{\sigma_T^2} =\vp - \theta, \hspace{0.5cm}
\lim_{T \rightarrow \infty} \frac{1}{T} \log\gamma_T=2\vp.
$$
Hence, we deduce from \eqref{DECMGFLS2} that
\begin{equation}
\label{LIMLTS}
\lim_{T \rightarrow \infty} L_T(a,b) =  -\frac{1}{2}\left(\theta  + \vp \right)
+\frac{a^2}{2(\vp - \theta)},
\end{equation}
which is exactly the limiting cumulant generating function $L(a,b)$ given by \eqref{DEFLS}. 
In the explosive case $\theta>0$, calculations are quite the same with the only significant modification
that $\varphi = -\sqrt{\theta^2-2b}$ instead of $\sqrt{\theta^2-2b}$. Then, \eqref{DECMGFLS2} holds true
with the new parameter $\varphi$ and
$$
\lim_{T \rightarrow \infty} \frac{1}{T} \log\gamma_T=0, \hspace{0.5cm}
\lim_{T \rightarrow \infty} \frac{\gamma_T}{\sigma_T^2} =-(\vp + \theta).
$$
Consequently, \eqref{DEFLS} follows from \eqref{DECMGFLS2},
completing the proof of Lemma \ref{L-KLEMMA}.
\end{proof}

\noindent
We shall also make use of normalized cumulant generating function $\Lambda_T$ of the couple
\begin{equation*}
W_T=\Bigl(\frac{X_T^2}{T},\frac{S_T}{T}\Bigr)
\end{equation*}
defined, for all $(a,b) \in \dR^2$, by
$$\Lambda_T(a,b)=\frac{1}{T}\log \dE \Bigl [ \exp \Bigl(aX_T^2+bS_T\Bigr) \Bigr ].$$
The proofs of LDP results in the unstable and explosive cases require the following lemma on the 
effective domain of the limiting cumulant generating function $\Lambda(a,b)$ of 
$\Lambda_T(a,b)$.

\begin{lem}
 \label{L-LCGFXcarre}
If $\theta \geq 0$, the effective domain of $\Lambda$ is given by 
 \begin{equation*}
  \mathcal{D}_\Lambda=\Bigl\{ (a,b) \in \mathbb{R}^2 \textrm{ such that } \theta^2-2b>0 \textrm{ and } 2a+\theta<\sqrt{\theta^2-2b} \Bigr\}. 
 \end{equation*}
\end{lem}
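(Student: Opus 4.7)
The plan is to mimic the Girsanov-based calculation carried out in the proof of Lemma \ref{L-KLEMMA}, applied now to the couple $W_T$ instead of $V_T$. The only structural change is that the linear contribution $a\sqrt{T}X_T$ appearing in $L_T$ is replaced by the quadratic contribution $aX_T^2$ in $\Lambda_T$, so that after the change of probability the problem reduces to estimating a pure chi-squared type Gaussian moment $\dE_\vp[\exp(c X_T^2)]$. The second constraint $2a+\theta<\sqrt{\theta^2-2b}$ will emerge exactly as the condition ensuring that this moment remains asymptotically finite, while the first constraint $\theta^2 - 2b > 0$ is forced by the need for $\vp$ to be real.

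Concretely, I would combine It\^o's formula $\int_0^T X_t dX_t=(X_T^2-T)/2$ with the change of probability $\frac{\dd\dP_\vp}{\dd\dP_\theta}$ introduced in the proof of Lemma \ref{L-KLEMMA} to write
$$\Lambda_T(a,b)=\frac{\vp-\theta}{2}+\frac{1}{T}\log \dE_\vp\left[\exp\left(\Bigl(a+\frac{\theta-\vp}{2}\Bigr)X_T^2+\Bigl(b+\frac{\vp^2-\theta^2}{2}\Bigr)S_T\right)\right].$$
The $S_T$ term is then killed by choosing $\vp$ so that $\vp^2=\theta^2-2b$, which already enforces $b<\theta^2/2$ and yields the first condition in the statement. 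Since $\theta\geq 0$, I would select $\vp=-\sqrt{\theta^2-2b}\leq 0$, exactly as in the explosive case of Lemma \ref{L-KLEMMA}, so that under $\dP_\vp$ the transformed process is stable and $X_T$ is centered Gaussian with variance $\sigma_T^2=(e^{2\vp T}-1)/(2\vp)$ tending to the finite limit $-1/(2\vp)$.

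Setting $c=a+(\theta-\vp)/2$, the one-dimensional Gaussian moment equals $(1-2c\sigma_T^2)^{-1/2}$ whenever $c<1/(2\sigma_T^2)$ and equals $+\infty$ otherwise. Since $1/(2\sigma_T^2)\to -\vp$ as $T\to\infty$, this finiteness condition becomes $c<-\vp$ in the limit, which rearranges to the announced inequality $2a+\theta<\sqrt{\theta^2-2b}$; when the reverse strict inequality holds, the Gaussian expectation is infinite for all sufficiently large $T$ and therefore $\Lambda(a,b)=+\infty$. The main obstacle is the borderline bookkeeping: at the boundary $c=-\vp$ one computes $1-2c\sigma_T^2=e^{2\vp T}$, which vanishes exponentially, but the logarithmic moment nevertheless remains finite and produces $\Lambda(a,b)=(|\vp|-\theta)/2$, so the strict inequalities in the statement really describe the open effective domain. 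The degenerate subcase $\theta^2-2b=0$ with $\vp=0$ and $\sigma_T^2=T$ unbounded must also be treated separately, by a direct asymptotic computation where finiteness demands $c\leq 0$ for large $T$.
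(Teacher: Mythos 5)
Your argument is exactly the route the paper intends: its entire proof of Lemma \ref{L-LCGFXcarre} is the sentence ``the proof is the same as that of Lemma \ref{L-KLEMMA}'', and you have correctly carried out that Girsanov/Gaussian computation with $\varphi=-\sqrt{\theta^2-2b}$, recovering the constraint $2a+\theta<\sqrt{\theta^2-2b}$ from the finiteness of the chi-squared moment. Your observation that $\Lambda$ remains finite on the boundary $2a+\theta=\sqrt{\theta^2-2b}$ is a fair (and minor) imprecision in the lemma's statement rather than a gap in your proof; it is harmless here because the lemma is only invoked to check that specific points lie well inside the open set.
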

\begin{proof}
The proof is the same as that of Lemma \ref{L-KLEMMA}
\end{proof}

\vspace{-4ex}
\subsection*{}
\begin{center}
{\rm {\small B.1. THE STABLE CASE.}}
\end{center}


\subsection*{Proof of Theorem \ref{T-LDP-SC}.}
The origin belongs to the interior of the domain $\cD_L$ given by \eqref{DOMLSU}. Moreover,
the function $L$, defined in \eqref{DEFLS}, is differentiable throughout $\cD_L$ and $L$ is steep, 
which means that $L$ is essentially smooth. Hence, one can immediately deduce from
the G\"artner-Ellis theorem that the couple $(V_T)$ satisfies an LDP with speed $T$ and good rate function 
$$
\cI_{\theta}(x,y)=\sup_{(a,b) \in \cD_L} \bigl\{ ax+by -L(a,b) \bigr\}.
$$
It is easy to compute $\cI_{\theta}$. After some straightforward calculations, we obtain the expression given by \eqref{RATE-SC},
which achieves the proof of Theorem \ref{T-LDP-SC}.
\demend
\vspace{-1ex}

\subsection*{Proof of Corollary \ref{C-LDP-ST}.}
Corollary \ref{C-LDP-ST} follows from Theorem \ref{T-LDP-SC} together with an elementary application
of the contraction principle. We already saw in Section \ref{S-LDP} that
$\wh{\theta}_T=f(V_T)$ where $f$ is the continuous function defined, for all $x\in \dR$ 
and for any positive $y$, by
$$
f(x,y)= \frac{x^2 - 1}{2y}.
$$
Consequently, one can immediately deduce from the contraction principle that the sequence $(\wh{\theta}_T)$
satisfies an LDP with good rate function $I_{\theta}$ given, for all $z \in \dR$, by
\begin{equation}
\label{CS1}
I_{\theta}(z) = \inf \Bigl\{ \cI_{\theta}(x,y) \text{ with } x \in \dR, y>0 \text{ such that } f(x,y)=z \Bigr\}.
\end{equation}
Hereafter, it only remains to properly evaluate $I_{\theta}$. As soon as $1+2yz \geq 0$,
$$
f(x,y)=z \iff x^2=1+2yz.
$$
Hence, \eqref{RATE-SC} together with \eqref{CS1} lead to
\begin{equation}
\label{CS2}
I_{\theta}(z) = \inf \bigl\{ h(y) \text{ with } 1+2yz \geq 0, y>0 \bigr\}
\end{equation}
where $h$ is the function defined, for any positive $y$, by
\begin{equation}
\label{CS3}
h(y) = \frac{\theta y(\theta -2z)}{2} +  \frac{(1+yz)^2}{2y}.
\end{equation}
We clearly have from \eqref{CS3} that $h$ is a convex function as
\begin{equation}
\label{CS4}
h^{\prime}(y)= \frac{1}{2}\Bigl( (z-\theta)^2 - \frac{1}{y^2}\Bigr)
\hspace{1cm}\text{and}\hspace{1cm}
h^{\prime \prime}(y)= \frac{1}{y^3}.
\end{equation}
The evaluation of the rate function $I_{\theta}$ depends on the location of
its argument. 
On the one hand, as soon as $z\leq \theta/3$, 
the border condition $1+2yz \geq 0$ plays a prominent role as
$$
I_{\theta}(z)=h\Bigl( -\frac{1}{2z}  \Bigr)= -\frac{(z-\theta)^2}{4z}.
$$
On the other hand, as soon as $z\geq \theta/3$, the border condition $1+2yz \geq 0$
does not have to be taken into account as
$$
I_{\theta}(z)=h\Bigl( \frac{1}{z-\theta}  \Bigr)= 2z - \theta,
$$
which completes the proof of Corollary \ref{C-LDP-ST}.
\demend

\vspace{-4ex}
\subsection*{}
\begin{center}
{\rm {\small B.2. THE UNSTABLE CASE.}}
\end{center}

\subsection*{Proof of Theorem \ref{T-LDP-UC}.}
The proof of Theorem \ref{T-LDP-UC} can be handled exactly as that of Theorem \ref{T-LDP-SC}
by taking the value $\theta = 0$. The function $L$, given by \eqref{DEFLS}, is essentially smooth.
However, in contrast with the stable case, the origin does no longer belong to 
the interior of $\cD_L$. It means that the sequence $(V_n)$ is not 
exponentially tight. This is the reason why we obtain a weak LDP for $(V_n)$ instead
of a full LDP, via the weak version of the G\"artner-Ellis Theorem \ref{T-GE}.
\demend

\subsection*{Proof of Corollary \ref{C-LDP-UT}.}
Since Theorem \ref{T-LDP-UC} provides us a weak LDP for the sequence $(V_n)$, we cannot deduce
Corollary \ref{C-LDP-UT} from a direct application of the contraction principle.
Instead of that, we shall prove the LDP for $(\wh{\theta}_T)$ 
by considering the rare events $\{\wh{\theta}_T \leq c\}$ and $\{\wh{\theta}_T \geq c\}$,
for $c$ negative and $c$ positive, respectively.
First of all, we have for any negative $c$,
\begin{equation*}
 \dP(\wh{\theta}_T \leq c) = \dP(f(V_T) \leq c)= \dP(V_T \in \Delta_c)
\end{equation*}
where the set $\Delta_c$ is given, for $a_c(x)=(x^2 -1)/2c$, by 
$$\Delta_c = \Bigl\{ (x,y) \in \dR^2 \text{ such that } |x| \leq 1 \text{ and } y \in [0,a_c(x)]\Bigr\}.$$
Since $\Delta_c$ is a compact set of $\dR^2$, Theorem \ref{T-LDP-UC} implies that
$$\lim_{T \rightarrow + \infty} \frac{1}{T} \log \mathbb{P}(\wh{\theta}_T \leq c) = -\inf_{(x,y) \in \Delta_c} \cI_0(x,y).$$
However, the rate function $\cI_0$ has no critical points and $\cI_0(x,0)=+\infty$. Hence,
$$\lim_{T \rightarrow + \infty} \frac{1}{T} \log \dP(\wh{\theta}_T \leq c) 
= -\inf_{|x| <1} \cI_0(x, a_c(x)) 
= -\cI_0\left(0,-\frac{1}{2c}\right) 
= \frac{c}{4}=-I_0(c).$$
We now consider the more tedious case where $c$ is positive. We have for any $\alpha >0$, 
\begin{equation}
\label{DECOUC}
 \dP(\wh{\theta}_T \geq c) = \dP\Bigl(\wh{\theta}_T \geq c,\frac{|X_T|}{\sqrt{T}} \leq \alpha \Bigr) +  
 \dP\Bigl(\wh{\theta}_T \geq c,\frac{|X_T|}{\sqrt{T}} > \alpha \Bigr).
\end{equation}
One can remark that 
$$\dP\Bigl(\wh{\theta}_T \geq c,\frac{|X_T|}{\sqrt{T}} \leq \alpha \Bigr) = \dP (V_T \in \Delta_{c,\alpha})$$
where $\Delta_{c,\alpha}$ is the compact set of $\mathbb{R}^2$ defined by
$$\Delta_{c,\alpha}= \Bigl\{ (x,y) \in \dR^2 \text{ such that } 1 \leq |x| \leq \alpha \text{ and } y \in [0,a_c(x)]\Bigr\}.$$
Therefore, we deduce from Theorem \ref{T-LDP-UC} that
$$\lim_{T \rightarrow + \infty} \frac{1}{T} \log \dP(V_T \in \Delta_{c,\alpha}) = -\inf_{(x,y) \in \Delta_{c,\alpha}} \cI_0(x,y).$$
After some straightforward calculations, we obtain that, as soon as $\alpha\ge\sqrt{3}$,
\begin{equation}
\label{PRUT1}
\lim_{T \rightarrow + \infty} \frac{1}{T} \log \dP\Bigl(\wh{\theta}_T \geq c,\frac{|X_T|}{\sqrt{T}} \leq \alpha \Bigr) = -\cI_0\Bigl(\sqrt{3},\frac{1}{c}\Bigr) = -2c = -I_0(c).
\end{equation}
It only remains to prove that the right-hand side of \eqref{DECOUC} is negligeable. It follows from Markov's inequality
that for any positive $\lambda$ and $\mu$,
\begin{eqnarray}
\dP\Bigl(\wh{\theta}_T \geq c,\frac{|X_T|}{\sqrt{T}} > \alpha \Bigr) &=& \dP\Bigl( X_T^2 -2cS_T \geq T,
 X_T^2 > \alpha^2 T\Bigr), \notag \\
 & \leq & \exp\Bigl(-T(\lambda + \mu \alpha^2)\Bigr) \dE\Bigl[ \exp\Bigl( (\lambda+\mu)X_T^2 -2\lambda cS_T\Bigr) \Bigl], \notag \\
 & \leq & \exp\Bigl(-T\Bigl((\lambda + \mu \alpha^2)-\Lambda_T(\lambda+\mu, -2 \lambda c) \Bigr) \Bigr). 
\label{PRUT2}
\end{eqnarray}
By choosing $\lambda=\mu=c/5$, it is not hard to see that the couple $(2c/5,-2c^2/5)$ belongs to the effective domain
$\cD_\Lambda$ given in Lemma \ref{L-LCGFXcarre}. Hence, as $\Lambda_T$ converges simply to $\Lambda$ on $\cD_\Lambda$,
we infer from \eqref{PRUT2} that for $T$ large enough,
\begin{equation*}
\dP\Bigl(\wh{\theta}_T \geq c,\frac{|X_T|}{\sqrt{T}} > \alpha \Bigr) \leq \exp\Bigl(-T\Bigl(\frac{c}{5}(1 +\alpha^2)-
2\Lambda\Bigl(\frac{2c}{5}, -\frac{2c^2}{5}\Bigr) \Bigr) \Bigr). 
\end{equation*}
which implies that for $\alpha$ and $T$ large enough, 
\begin{equation}
\label{PRUT3}
 \dP\Bigl(\wh{\theta}_T \geq c,\frac{|X_T|}{\sqrt{T}} > \alpha \Bigr) \leq \exp(-3cT).
\end{equation}
Therefore, it follows from the conjunction of \eqref{DECOUC}, \eqref{PRUT1} and \eqref{PRUT3} that
for any positive $c$,
$$\lim_{T \rightarrow + \infty} \frac{1}{T}\log \dP(\wh{\theta}_T \geq c) = -2c = -I_0(c).$$
Finally, in the unstable case, $X_T$ has an $\cN(0,T)$ distribution. Hence,
the case $c=0$ is straightforward as
$$\lim_{T \rightarrow + \infty} \frac{1}{T}\log \dP(\wh{\theta}_T \geq 0) = 
\lim_{T \rightarrow + \infty} \frac{1}{T}\log \dP( X_T^2 \geq T) = 0 =I_0(0),$$
which achieves the proof of Corollary \ref{C-LDP-UT}.
\demend

\vspace{-4ex}
\subsection*{}
\begin{center}
{\rm {\small B.3. THE EXPLOSIVE CASE.}}
\end{center}


\subsection*{Proof of Theorem \ref{T-LDP-EC}.}

The proof of Theorem \ref{T-LDP-EC} can be handled exactly as that of Theorem \ref{T-LDP-SC}
by taking $\theta > 0$.
However, in contrast with the stable case, the origin does no longer belong to 
the interior of $\cD_L$ and the function $L$, given by \eqref{DEFLS}, is not essentially smooth.
This is the reason why we are only allowed to apply the weakest version of the G\"artner-Ellis Theorem \ref{T-GE}.
\demend

\subsection*{Proof of Corollary \ref{C-LDP-EC}.}
We shall proceed as in the proof of Corollary \ref{C-LDP-UT} by considering rare events 
$\{\wh{\theta}_T \leq c\}$ and $\{\wh{\theta}_T \geq c\}$,
for $c<\theta$ and $c>\theta$, respectively.
First of all, we already saw that for any negative $c$,
$\dP(\wh{\theta}_T \leq c)= \dP(V_T \in \Delta_c)$
where $\Delta_c$ is the compact set of $\dR^2$ given, for $a_c(x)=(x^2 -1)/2c$, by 
$$\Delta_c = \Bigl\{ (x,y) \in \dR^2 \text{ such that } |x| \leq 1 \text{ and } y \in [0,a_c(x)]\Bigr\}.$$
Since $\Delta_c \cap \mathcal{F} \neq \emptyset$, it follows from Theorem \ref{T-LDP-UC} together with
Remark \ref{REM-TthetaC0} that 
$$\lim_{T \rightarrow + \infty} \frac{1}{T} \log \dP(\wh{\theta}_T \leq c) = -\inf_{(x,y) \in \Delta_c} \cI_{\theta}(x,y).$$
However, the rate function $\cI_\theta$ has no critical points on $\mathcal{F}$ and $\cI_\theta(x,0)=+\infty$.
Consequently,
$$\lim_{T \rightarrow + \infty} \frac{1}{T} \log \dP(\wh{\theta}_T \leq c) 
= -\inf_{|x| <1} \cI_\theta(x, a_c(x)) 
= -\cI_\theta\left(0,-\frac{1}{2c}\right).$$
In particular, as soon as $c <-\theta$,
$$\lim_{T \rightarrow + \infty} \frac{1}{T} \log \dP(\wh{\theta}_T \leq c) = \frac{(c-\theta)^2}{4c} = -I_{\theta}(c),$$
while, for $-\theta \leq c < 0$,
$$\lim_{T \rightarrow + \infty} \frac{1}{T} \log \dP(\wh{\theta}_T \leq c) = -\theta = -I_{\theta}(c).$$
From now on, assume that $0 \leq c <\theta$. 
We have for any $\alpha >1/2\theta$, 
\begin{equation}
\label{DECOUCE}
 \dP(\wh{\theta}_T \leq c) = \dP\Bigl(\wh{\theta}_T \leq c,\frac{S_T}{T} \leq \alpha\Bigr) +  
 \dP\Bigl(\wh{\theta}_T \leq c,\frac{S_T}{T} > \alpha\Bigr).
\end{equation}
On can remark that 
$$\dP\Bigl(\wh{\theta}_T \leq c,\frac{S_T}{T} \leq \alpha \Bigr) = \mathbb{P} (V_T \in \Delta_{c,\alpha})$$
where $\Delta_{c,\alpha}$ is the compact set of $\mathbb{R}^2$ defined by
$$\Delta_{c,\alpha}= \Bigl\{ (x,y) \in \mathbb{R}^2 \text{ such that } 0\leq y \leq \alpha \text{ and } y \geq a_c(x) \Bigr\}.$$
Since $\Delta_{c,\alpha} \cap \cF \neq \emptyset$, we obtain from Theorem \ref{T-LDP-UC} together with Remark \ref{REM-TthetaC0} that
$$\lim_{T \rightarrow + \infty} \frac{1}{T} \log \dP(V_T \in \Delta_{c,\alpha}) = -\inf_{(x,y) \in \Delta_{c,\alpha}} \cI_\theta(x,y).$$
After some straightforward calculations, we find that
\begin{equation}
\label{PRET1}
\lim_{T \rightarrow + \infty} \frac{1}{T} \log 
\dP\Bigl(\wh{\theta}_T \leq c,\frac{S_T}{T} \leq \alpha\Bigr) = 
-\cI_\theta \Bigl(0,\frac{1}{2\theta}\Bigr) = - \theta 
= -I_\theta(c).
\end{equation}
It now remains to show that the remainder term of \eqref{DECOUCE} is negligeable. 
It follows from Markov's inequality
that for any negative $\lambda$ and for any positive $\mu$,
\begin{eqnarray}
\dP\Bigl(\wh{\theta}_T \leq c,\frac{S_T}{T} > \alpha\Bigr) &=& \dP\Bigl( X_T^2 -2cS_T \leq T,
 S_T > \alpha T\Bigr), \notag \\
 & \leq & \exp\Bigl(-T(\lambda + \mu \alpha)\Bigr) \dE\Bigl[ \exp\Bigl( \lambda X_T^2 +(\mu-2\lambda c)S_T\Bigr) \Bigl], \notag \\
 & \leq & \exp\Bigl(-T\Bigl((\lambda + \mu \alpha)-\Lambda_T(\lambda, \mu-2\lambda c) \Bigr) \Bigr). 
\label{PRET2}
\end{eqnarray}
By setting $\lambda=(c-\theta)/2$ and $\mu=(c-\theta)^2/4$, one can check 
that the couple $(\lambda,\mu-2\lambda c)$ belongs to the effective domain
$\cD_\Lambda$ given in Lemma \ref{L-LCGFXcarre}. Hence, 
we obtain from \eqref{PRET2} that for $\alpha$ and $T$ large enough, 
\begin{equation}
\label{PRET3}
\dP\Bigl(\wh{\theta}_T \leq c,\frac{S_T}{T} > \alpha\Bigr) \leq \exp(-2\theta T).
\end{equation}
As a consequence, we deduce from \eqref{DECOUCE}, \eqref{PRET1} and \eqref{PRET3} that
for any $0 \leq c <\theta$,
$$\lim_{T \rightarrow + \infty} \frac{1}{T}\log \dP(\wh{\theta}_T \leq c) = -\theta = -I_\theta(c).$$
Finally, we shall investigate the case $ c >\theta$. 
We have for any $\alpha >0$, 
\begin{equation}
\label{DECOUCF}
 \dP(\wh{\theta}_T \geq c) = \dP\Bigl(\wh{\theta}_T \geq c,\frac{|X_T|}{\sqrt{T}} \leq \alpha \Bigr) +  
 \dP\Bigl(\wh{\theta}_T \geq c,\frac{|X_T|}{\sqrt{T}} > \alpha \Bigr).
\end{equation}
As in the proof of Corollary \ref{C-LDP-UT},
$$\dP\Bigl(\wh{\theta}_T \geq c,\frac{|X_T|}{\sqrt{T}} \leq \alpha \Bigr) = \dP (V_T \in \Delta_{c,\alpha})$$
where $\Delta_{c,\alpha}$ is the compact set of $\mathbb{R}^2$ defined by
$$\Delta_{c,\alpha}= \Bigl\{ (x,y) \in \dR^2 \text{ such that } 1 \leq |x| \leq \alpha \text{ and } y \in [0,a_c(x)]\Bigr\}.$$
Since $\Delta_{c,\alpha} \cap \cF \neq \emptyset$, it follows from Theorem \ref{T-LDP-UC} together with Remark \ref{REM-TthetaC0} that
$$\lim_{T \rightarrow + \infty} \frac{1}{T} \log \dP(V_T \in \Delta_{c,\alpha}) = -\inf_{(x,y) \in \Delta_{c,\alpha}} \cI_\theta(x,y).$$
Furthermore, denote
$$
\alpha_c(\theta)=\sqrt{\frac{c+\theta}{c-\theta}}.
$$
After some straightforward calculations, we obtain that, as soon as $\alpha\ge \alpha_c(\theta)$,
\begin{equation}
\label{PREF1}
\lim_{T \rightarrow + \infty} \frac{1}{T} \log \dP\Bigl(\wh{\theta}_T \geq c,\frac{|X_T|}{\sqrt{T}} \leq \alpha \Bigr) = 
-\cI_\theta\Bigl(\alpha_c(\theta),\frac{1}{c-\theta}\Bigr) = \theta-2c = -I_\theta(c).
\end{equation}
Using once again Markov's inequality, we have for any positive $\lambda$ and $\mu$,
\begin{eqnarray}
\dP\Bigl(\wh{\theta}_T \geq c,\frac{|X_T|}{\sqrt{T}} > \alpha \Bigr) &=& \dP\Bigl( X_T^2 -2cS_T \geq T,
 X_T^2 > \alpha^2 T\Bigr), \notag \\
 & \leq & \exp\Bigl(-T(\lambda + \mu \alpha^2)\Bigr) \dE\Bigl[ \exp\Bigl( (\lambda+\mu)X_T^2 -2\lambda cS_T\Bigr) \Bigl], \notag \\
 & \leq & \exp\Bigl(-T\Bigl((\lambda + \mu \alpha^2)-\Lambda_T(\lambda+\mu, -2 \lambda c) \Bigr) \Bigr). 
\label{PREF2}
\end{eqnarray}
By choosing $\lambda=(c^2-\theta^2)/4c$ and $\mu = (c-\theta)^2/8c$, it is not hard to see that the couple 
$(\lambda+\mu,-2c\lambda )$ belongs to the effective domain
$\cD_\Lambda$ given in Lemma \ref{L-LCGFXcarre}. Hence, we obtain from \eqref{PREF2} that for $\alpha$ and $T$ large enough, 
\begin{equation}
\label{PREF3}
 \dP\Bigl(\wh{\theta}_T \geq c,\frac{|X_T|}{\sqrt{T}} > \alpha \Bigr) \leq \exp(-2(2c-\theta)T).
\end{equation}
Therefore, it follows from the conjunction of \eqref{DECOUCF}, \eqref{PREF1} and \eqref{PREF3} that
for any positive $c >\theta$,
$$\lim_{T \rightarrow + \infty} \frac{1}{T}\log \dP(\wh{\theta}_T \geq c) = \theta-2c = -I_\theta(c),$$
which completes the proof of Corollary \ref{C-LDP-EC}.
\demend

\section*{Appendix C \\ Proofs of CI results.}
\renewcommand{\thesection}{\Alph{section}}
\renewcommand{\theequation}{\thesection.\arabic{equation}}
\setcounter{section}{3}
\setcounter{equation}{0}
\label{PRCI}

It follows from \eqref{OUPS} and \eqref{DEFTHETAHAT} that
\begin{equation*} 
\wh{\theta}_T -\theta= \frac{M_T}{S_T}
\hspace{1cm}\text{where}\hspace{1cm}
M_T=\int_0^T X_t dB_t.
\end{equation*}
The sequence $(M_T)$ is a locally square-integrable martingale with $M_0=0$.
For all $a \in \dR$, denote
$$
W_T(a)=\exp\Bigl(a M_T - \frac{a^2}{2} S_T\Bigr).
$$
It is well-known that $(W_T(a))$ is a positive supermartingale
such that $\dE[W_T(a)] \leq 1$, see e.g. \cite{BJY86}. We are now in position
to prove Theorem \ref{T-CI}.

\subsection*{Proof of Theorem \ref{T-CI}.} Via the same lines as in the proof of Theorem 3.25 in \cite{BDR15},
see also the Appendix of \cite{DP99}, we claim that for any positive $x$,
\begin{equation} 
\label{CIP1}
\dP(|\wh{\theta}_T -\theta| \geq x) \leq 2 \inf_{p>1}\left(\dE\Bigl[\exp\Bigl(-(p-1)\frac{x^2}{2}
S_T\Bigr)\Bigr]\right)^{1/p}\!\!\!\!.
\end{equation}
As a matter of fact, we have $\dP(|\wh{\theta}_T -\theta| \geq x)= 2 \,\dP(A_T)$ where
$A_T=\{M_T\geq x S_T\}$.
We deduce from Markov's inequality together with Holder's inequality that,
for any positive $a$ and for all $q>1$,
\begin{eqnarray}
\label{CIP2}
\dP(A_T)&\leq &\dE\Bigl[\exp\Bigl(\frac{a}{q}M_T - \frac{ax}{q}S_T\Bigr)
\rI_{A_T}\Bigr],\nonumber \\
&\leq &\dE\Bigl[(W_T(a))^{1/q}
\exp\Bigl(\frac{a}{2q}(a- 2x)S_T\Bigr)\rI_{A_T}\Bigr],\nonumber \\
&\leq &
\left(\dE\Bigl[\exp\Bigl(\frac{ap}{2q}(a- 2x)S_T\Bigr)\Bigr]\right)^{1/p}
\end{eqnarray}
where $p$ and $q$ are H\"older conjugate exponents, since $\dE[W_T(a)] \leq 1$. Consequently, 
we deduce from \eqref{CIP2} with $a=x$ and the elementary fact that $p/q=p-1$, that
\begin{equation*}
\dP(A_T)\leq \inf_{p>1}\left(
\dE\Bigl[\exp\Bigl(-(p-1)\frac{x^2}{2}S_T\Bigr)\Bigr]\right)^{1/p}
\end{equation*}
which immediately leads to \eqref{CIP1}. It only remains to find a suitable upper bound
for the Laplace transform of $S_T$. Let $b=-(p-1)x^2/2$ where $p>1$.
In the stable and unstable cases $\theta \leq 0$, it follows from \eqref{VARS} and
\eqref{DECMGFLS2} with $a=0$ and $\varphi = \sqrt{\theta^2-2b}$, that
\begin{eqnarray} 
\dE[\exp(bS_T)] & = &\exp\Bigl(\frac{T}{2}\bigl(\vp-\theta\bigr) -\frac{1}{2}\log \bigl(1+ (\vp-\theta)\sigma_T^2\bigr)\Bigr), \notag \\
& \leq & \exp\Bigl(-\frac{T}{2}\bigl(\vp+\theta\bigr) -\frac{1}{2}\log \Bigl(\frac{\vp-\theta}{2\vp}\Bigr)\Bigr)
\label{CIP3}
\end{eqnarray}
as $\vp>-\theta$. Consequently, we obtain from \eqref{CIP1} and \eqref{CIP3}
that, for any positive $x$,
\begin{equation} 
\label{CIP4}
\dP(|\wh{\theta}_T -\theta| \geq x) \leq 2 \inf_{p>1}
\exp\Bigl(-\frac{1}{2p}\Bigl( T\bigl(\vp+\theta\bigr) +\log \Bigl(\frac{\vp-\theta}{2\vp}\Bigr)\Bigr)\Bigr).
\end{equation}
On the one hand, if $\theta=0$ and $y=\sqrt{(p-1)x^2}$, \eqref{CIP4} immediately implies that
\begin{equation} 
\label{CIP5}
\dP(|\wh{\theta}_T | \geq x) \leq 2 \inf_{y>0}
\exp\Bigl(-\frac{x^2}{2}\Bigl( \frac{Ty -\log 2}{x^2+y^2}\Bigr)\Bigr).
\end{equation}
On the other hand, if $\theta<0$ and $y=-(\vp + \theta)/\theta$, it follows from \eqref{CIP4} that
\begin{equation} 
\label{CIP6}
\dP(|\wh{\theta}_T -\theta| \geq x) \leq 2 \inf_{y>0}
\exp\Bigl(-\frac{x^2}{2}\Bigl( \frac{-\theta T y + \log(y+2) -\log(2(y+1))}{x^2+\theta^2y(y+2)}\Bigr)\Bigr).
\end{equation}
By the same taken, in the explosive case $\theta>0$, we find that
\begin{equation} 
\dE[\exp(bS_T)] \leq  \exp\Bigl(\frac{T}{2}\bigl(\vp-\theta\bigr) -\frac{1}{2}\log \Bigl(\frac{\vp+\theta}{2\vp}\Bigr)\Bigr)
\label{CIP7}
\end{equation}
where $\varphi = -\sqrt{\theta^2-2b}$. Therefore, if $y=-(\vp + \theta)/\theta$, we deduce from
\eqref{CIP1} and \eqref{CIP7} that, for any positive $x$,
\begin{equation} 
\label{CIP8}
\dP(|\wh{\theta}_T -\theta| \geq x) \leq 2 \inf_{y>0}
\exp\Bigl(-\frac{x^2}{2}\Bigl( \frac{\theta T(y+2) + \log y -\log(2(y+1))}{x^2+\theta^2y(y+2)}\Bigr)\Bigr).
\end{equation}
Finally, \eqref{CIGEN} follows from the conjunction of \eqref{CIP5}, \eqref{CIP6} and \eqref{CIP8},
which achieves the proof of Theorem \ref{T-CI}.
\demend

\vspace{-2ex}
\subsection*{Proof of Corollary \ref{C-CI-SC}.}
In the stable case $\theta<0$, we clearly have for any $y>0$, $h_T(y)>\ell_T(y)$ where
$$
\ell_T(y)=\frac{-\theta T y -\log 2}{x^2 +\theta^2y(y+2)}.
$$
The function $\ell_T$ reaches its maximum at
the value
$$
y_x=-\frac{1}{\theta T} \bigl(\log 2 + \sqrt{T^2x^2 -2\theta T \log 2+(\log 2)^2}\bigr).
$$
Putting this value into \eqref{CIGEN} immediately leads to \eqref{CI-SC}.
\demend

\vspace{-2ex}
\subsection*{Proof of Corollary \ref{C-CI-UC}.}
The proof of Corollary \ref{C-CI-UC} is left to the reader as it is exactly the same as that of
Corollary \ref{C-CI-SC}.
\demend

\vspace{-2ex}
\subsection*{Proof of Corollary \ref{C-CI-EC}.}
In the explosive case $\theta>0$, putting the value 
$$
y= \frac{\log 2}{\theta T}
$$
into \eqref{CIGEN} immediately leads to \eqref{CI-EC-xpetit}. 
\demend

\vspace{-2ex}
\bibliographystyle{acm}

\end{document}